\theoremstyle{plain}% Theorem-like structures provided by amsthm.sty
\newtheorem{theorem}{Theorem}[section]
\newtheorem{lemma}[theorem]{Lemma}
\newtheorem{corollary}[theorem]{Corollary}
\newtheorem{proposition}[theorem]{Proposition}
\newtheorem{hyp}[theorem]{HYPOTHESIS}
\theoremstyle{definition}
\newtheorem{definition}[theorem]{Definition}
\newtheorem{example}[theorem]{Example}
\newtheorem{ex}[theorem]{Example}
\theoremstyle{remark}
\newtheorem{remark}{Remark}
\newtheorem{notation}{Notation}
 \DeclareMathAlphabet{\mathpzc}{OT1}{pzc}{m}{it}
 \DeclareMathAlphabet{\mathsfsl}{OT1}{cmss}{m}{sl}
  \newcommand{\V}{\mathcal{V}}
  \newcommand{\FH}{\mathfrak{H}}
\newcommand{\dif}{\mathrm{d}}
\newcommand{\R}{\mathbb{R}}
\newcommand{\abs}[1]{\left\vert#1\right\vert}
\newcommand{\set}[1]{\left\{#1\right\}}
 \newcommand{\innp}[1]{\langle {#1}\rangle}
 \newcommand{\1}{\mathbf{1}}
 \newcommand{\Rnum}{\mathbb{R}}
\newcommand{\E}{\mathbb{E}}
\begin{document}
%\articletype{ARTICLE TEMPLATE}

\title{The properties of fractional Gaussian Process and their Applications.}

\author{
\name{Yong CHEN\textsuperscript{a},  and  Ying LI\textsuperscript{b},\thanks{CONTACT Ying LI, Email: liying@xtu.edu.cn.}}
\affil{ \textsuperscript{a}School of Mathematics and Statistics, Jiangxi Normal University, Nanchang, 330022, Jiangxi, China.  
\textsuperscript{b}School of Mathematics and Computional Science, Xiangtan University, Xiangtan, 411105, Hunan, China.}
}

\maketitle
%-------------------------------------------------------------------------------------------------------
\begin{abstract}
The process $(G_t)_{t\in[0,T]}$
  is referred to as fractional Gaussian process if the first-order partial derivative of the difference between its covariance function and that of the fractional Brownian motion $(B^H_t)_{t\in[0,T ]}$ is a normalized bounded variation function. We quantify the relation between the associated reproducing kernel Hilbert space of $(G)$ and that of  $(B^H)$. Seven types of Gaussian processes with non-stationary increments in the literature belong to it. In the context of applications, we demonstrate that the Gladyshev's theorem holds for this process, and we provide Berry-Ess\'{e}en upper bounds associated with the statistical estimations of the ergodic fractional Ornstein-Uhlenbeck process driven by it. The second application partially builds upon the idea introduced in \cite{BBES 23}, where they assume that $(G)$ has stationary increments. Additionally, we briefly discuss a variant of this process where the covariance structure is not entirely linked to that of the fractional Brownian motion.
\end{abstract}
\begin{keywords}Fractional Gaussian process; Fractional Brownian motion;  Gladyshev's theorem; Fourth moment theorem; Berry-Ess\'{e}en bound.\end{keywords}
\begin{amscode} 60G22; 60G15; 60F15; 62M09 \end{amscode}

\section{Introduction}\label{sec-int}
The motivation of this paper is to propose a precise definition of fractional Gaussian process and investigate its properties and applications. Although the terminology of fractional Gaussian process is used extensively in the literature such as \cite{Begyn 07, limeba, zhangxiao 17}, {a clear definition of fractional Gaussian process has not yet been established}. It is believed that commonly used Gaussian processes with nonstationary increments, such as sub-fractional Brownian motion, bi-fractional Brownian motion, generalized fractional Brownian motion, and generalized sub-fractional Brownian motion, fall under the category of fractional Gaussian process. This is because these Gaussian processes possess properties that are similar to those of fractional Brownian motion (fBm). 

{If we compare the covariance structures (covariance functions) of these Gaussian processes with that of fBm, we observe a common term:}
\begin{align*}
-\frac12\abs{t-s}^{2H}.
\end{align*}
Hence, it is natural to use the difference between {the covariance functions to assess how far is the fractional Gaussian process from the fBm.} This motivates us to define the fractional Gaussian process as follows:
\begin{definition}
The centred Gaussian process $(G_t)_{t\in[0,T ]}$ with $G(0)=0$  %with zero mean and $G(0)=0$ 
is called a fractional Gaussian process if its covariance function $R(s, t)$ satisfies the following hypothesis: %: under general assumption on the covariance function of the Gaussian process. 	
  \begin{hyp}\label{hypthe1}
The covariance function $R(s,t)=E (G_tG_s)$ satisfies that
\begin{enumerate}
  \item[($H_1$)] for any fixed $s \in[0,T]$, $R(s,t)$ is an absolutely continuous function on $t\in [0,T]$;
  \item[($H_2$)] for any fixed $t \in[0,T]$, the difference
\begin{align}
\frac{\partial R(s,t)}{\partial t} - \frac{\partial R^{B}(s,t)}{\partial t} \label{cha-key}
\end{align}
is a normalized bounded variation function on $s\in  [0,T]$, where $R^B(s,t)$ is the covariance function of fBm $(B^H_t)_{t\in[0,T ]}$ with Hurst parameter $H\in (0, \,1)$. %the restriction on $s\in[0,T]$ of a normalized bounded variation function on $s\in \Rnum$. %an absolutely continuous function on $s\in [0,T]$.
 %  \item[(4)] there exists a positive constant $C$ independent on $T$ such that  \begin{align}\label{bijiao 00} \Bigg| \frac{\partial}{\partial s}\Big(\frac {\partial R(s,t)}{\partial t} - \frac {\partial R^{B}(s,t)}{\partial t}\Big)\Bigg|  & \le  C   (ts )^{H-1},\quad a.s. \end{align} 
\end{enumerate}
\end{hyp}
\end{definition}
A bounded variation function $F$ is said to be normalized if it is right continuous on $\R$ and its limits at $-\infty$ is zero, i.e., $F(-\infty)=0$, see \cite{Foll 99}. The assumption of the process $(G) $ being centred (zero mean) and $G(0)=0$ implies that for any $t \ge 0$, $R(0,t)=0$ and hence $$\frac {\partial R(0,t)}{\partial t} - \frac {\partial R^{B}(0,t)}{\partial t}=0.$$ 
Clearly, we can extend the value of the difference \eqref{cha-key} to be equal to zero in $(-\infty,\,0)$ and equal to the value at the endpoint $T$ in $(T,\infty)$. Thus, to check this bounded variation function is normalized on $s\in  [0,T]$, we only need to show that it is also right continuous on $s\in  [0,T]$.

This definition of fractional Gaussian process is an extension of that in \cite{Chenzhou2021, withdingzhen,  chengu2021}, where Hypothesis ($H_{2}$) is taken as the following special case:
\begin{enumerate}
  \item[($H_{2}'$)] For any fixed $t \in[0,T]$, the difference \eqref{cha-key}
%  $$\frac {\partial R(s,t)}{\partial t} - \frac {\partial R^{B}(s,t)}{\partial t}$$ 
  is an absolutely continuous function on $s\in [0,T]$.
 \end{enumerate}

The aforementioned Gaussian processes {satisfy ($H_{2}'$)}, please see Section~\ref{exmp section-1} for details. Here we would like to single out one example in which for any fixed $t \in[0,T]$, the difference \eqref{cha-key} %$$\frac {\partial }{\partial t} \big(R(s,t)- R^{B}(s,t)\big)$$ 
 is a (pure discontinuous) step function on $s\in [0,T]$, as discussed in Section~\ref{sec2-1 new}.
\begin{example}\label{exmp lizi001}
Suppose that $H \in (0, \frac12)$. The Gaussian process $(G) $ has the covariance function 
 \begin{align} \label{cova func 01}
 R(s, t)=\frac12\big[ (\max(s,t))^{2H} - |t-s|^{2H}\big] .
 \end{align}
For further details, please refer to  Theorem 1.1 of \cite{Talarczyk2020}.
\end{example}

All kinds of assumptions regarding the second-order partial derivative  $\frac{\partial^2  R(s,t)}{\partial s\partial t}$ and their corresponding properties and applications can be traced back to a very long time ago, see \cite{Adler 81} for example. However,  Hypothesis~\ref{hypthe1} {is a novel contribution in our knowledge}.  In Section~\ref{mainresul}, we will present its properties and applications. We point out that the similar approach to bound the mixed second-order or four-order partial derivative of the difference $R(s,t)-R^B(s,t)$ also appeared in the literature \cite{Nor 11} and \cite{CGPP 2006} respectively.  Moreover, \cite{Viitasaari19} proposed the idea of bounding the mixed second-order partial derivative of the structure functions of $(G)$, denoted as 
$$\psi_G(s,t):=\E\big[(G_s-G_t)^2\big] .$$

%The structure of the paper is as follows. 
The rest of the paper is organized as follows: In Section~\ref{mainresul} we present our main results: (i) We  quantify the relation between the associated reproducing kernel Hilbert space of $(G)$ and $(B^H)$; (ii) We discuss Gladyshev's theorem for the fractional Gaussian process $(G)$; (iii) We provide three Berry-Ess\'{e}en types upper bounds that are related to the statistical estimations of ergodic fractional Ornstein-Uhlenbeck (OU) process driven by the fractional Gaussian process $(G)$. Sections~\ref{sec-2} and~\ref{sec3 applications} are devoted to show the results (i) and (ii)-(iii) respectively. In Section~\ref{exmp section} we illustrate that the other six types Gaussian processes in the literature satisfy Hypothesis~\ref{hypthe1}. In Section~\ref{sec-dis} we study a type of Gaussian process whose covariance structure is not entirely linked to that of the fBm. In Section~\ref{appendx},  we present several auxiliary lemmas, including the integration by parts of signed Lebesgue-Stieltjes measure and the estimates of three cumulants of a Wiener chaos.

Except in Section~\ref{sec-dis},  we always assume that Hypothesis~\ref{hypthe1} holds throughout the paper.  In addition, the symbol $C$ stands for a generic constant, whose value can change from one line to another.

\section{Main results}\label{mainresul}	
%a lot of merits of this type of fractional Gaussian processes. First, we can 
Let $\FH$ and $\FH_1$ denote the associated reproducing kernel Hilbert spaces of the fractional Gaussian process $(G)$ and the fBm $(B^H)$ respectively, as discussed in Section~\ref{sec-2}. Furthermore, let $\mathcal{V}_{[0,T]}$ represent the set of bounded variation functions on the interval $[0,T]$.

\subsection{The relation between $\FH$ and $\FH_1$}\label{sec2-1 new}

The first result of this paper is to quantify the relationship between $\FH$ and $\FH_1$.%, the associated reproducing kernel Hilbert spaces of  $(G)$ and $(B^H)$ respectively. 
 \begin{theorem}\label{prop 2-1}
Let  $f,\, g\in \mathcal{V}_{[0,T]}$. If $g(\cdot)$ has no common discontinuous points with the difference function $\frac {\partial R(\cdot,t)}{\partial t} - \frac {\partial R^{B}(\cdot,t)}{\partial t} $ on $[0,T]$ for each fixed $t\in [0,T]$, then
  \begin{align} \label{diyierdengshi-yiban}
 \langle f,\,g \rangle_{\FH}-  \langle f,\,g \rangle_{\FH_1}&=\int_{[0,T]^2}  {f}(t)g(s)  \dif t \,\mu(t,\dif s),
 \end{align} where $\mu(t,\cdot)$ is the Lebesgue-Stieltjes measure of the normalized bounded variation function $\frac {\partial R(\cdot,t)}{\partial t} - \frac {\partial R^{B}(\cdot,t)}{\partial t} $ on  $[0,T]$ for each fixed $t\in [0,T]$. In particular, if $g$ is continuous, then the identity \eqref{diyierdengshi-yiban} holds.
 \end{theorem}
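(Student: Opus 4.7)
My plan is to establish \eqref{diyierdengshi-yiban} first for pairs of indicator functions, extend by bilinearity to step functions, and then pass to the limit for general BV pairs satisfying the stated compatibility. Taking $f=\1_{[0,a]}$ and $g=\1_{[0,b]}$ with $a,b\in[0,T]$, the RKHS identification gives $\innp{f,g}_{\FH}=R(a,b)$ and $\innp{f,g}_{\FH_1}=R^B(a,b)$. By $(H_1)$ this difference equals
\[
R(a,b)-R^B(a,b)=\int_0^a\bigl[\partial_t R(b,t)-\partial_t R^B(b,t)\bigr]\dif t.
\]
Writing $F(\cdot;t):=\partial_t R(\cdot,t)-\partial_t R^B(\cdot,t)$, the normalization $F(-\infty;t)=0$ together with $F(0;t)=0$ (which follows from $R(0,\cdot)\equiv R^B(0,\cdot)\equiv 0$) yields $\mu(t,[0,b])=F(b;t)$, so that the integrand equals $\int_{[0,T]}g(s)\mu(t,\dif s)$ and \eqref{diyierdengshi-yiban} holds for indicator pairs. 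By bilinearity of both sides in $(f,g)$, the identity then extends to every pair of step functions.

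For general $f,g\in\V_{[0,T]}$ under the compatibility hypothesis, I would approximate by step functions $f_n,g_n$ associated with partitions of $[0,T]$ of vanishing mesh. For each fixed $t$, because $g$ shares no discontinuity with $F(\cdot;t)$, the Helly/Riemann--Stieltjes convergence theorem delivers $\int g_n(s)\mu(t,\dif s)\to\int g(s)\mu(t,\dif s)$. An integration-by-parts identity (supplied by the auxiliary lemmas in Section~\ref{appendx}) rewrites this integral as $g(T)F(T;t)-\int F(s;t)\dif g(s)$, yielding a uniform bound in terms of $\norm{g}_{\infty}$ (or $\norm{g}_{\V_{[0,T]}}$) and the $s$-total variation of $F(\cdot;t)$. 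Provided this total variation is $t$-integrable (which should follow from Hypothesis~\ref{hypthe1} together with the local regularity of $R$ and $R^B$), dominated convergence then justifies interchanging the limit with the outer $t$-integration. The corresponding convergence $\innp{f_n,g_n}_{\FH}\to\innp{f,g}_{\FH}$ (and its $\FH_1$ analogue) will follow from the BV-integral representation of these RKHS inner products developed in Section~\ref{sec-2}. The final clause is then immediate: a continuous $g$ trivially excludes any common discontinuity with $F(\cdot;t)$.

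The main obstacle I anticipate is the last step: securing a $t$-integrable (or uniformly bounded in $t$) control of $\|F(\cdot;t)\|_{\V_{[0,T]}}$ so that the limit can be pulled through the outer $t$-integration, and identifying BV functions with their images in $\FH$ and $\FH_1$ in such a way that step-function approximations converge in both norms. The indicator and bilinearity steps are essentially routine bookkeeping once these ingredients and the integration-by-parts toolkit of Section~\ref{appendx} are in place.
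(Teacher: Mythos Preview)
Your approach is different from the paper's, and the limiting step contains a genuine gap. The paper does not approximate at all: it starts from Jolis's representation
\[
\langle f,g\rangle_{\FH}=\int_{[0,T]^2}R(s,t)\,\nu_f(\dif t)\,\nu_g(\dif s),
\]
integrates by parts once in $t$ (using $(H_1)$ and Lemma~\ref{partial integral}(a)) to reach Proposition~\ref{prop 2-1010},
\[
\langle f,g\rangle_{\FH}=-\int_{[0,T]}\nu_g(\dif s)\int_{[0,T]}\frac{\partial R(s,t)}{\partial t}\,f(t)\,\dif t,
\]
subtracts the analogous $\FH_1$ identity, swaps the order of integration, and then integrates by parts once more in $s$ using $(H_2)$, the no-common-discontinuity hypothesis, and Lemma~\ref{partial integral}(b). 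This yields \eqref{diyierdengshi-yiban} directly for every $f,g\in\V_{[0,T]}$, with no limit argument anywhere.

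The gap in your route is precisely where you flag it. Hypothesis~\ref{hypthe1} asserts only that $F(\cdot;t)=\partial_t R(\cdot,t)-\partial_t R^B(\cdot,t)$ is a normalized BV function for each fixed $t$; it gives no bound, uniform or $t$-integrable, on $\|F(\cdot;t)\|_{\V_{[0,T]}}$, so your dominated-convergence step in the outer integral is not justified by the stated assumptions. A second issue is the convergence $\langle f_n,g_n\rangle_{\FH}\to\langle f,g\rangle_{\FH}$: step approximations of a BV function need not converge in the $\FH$-norm, and the natural way to check convergence of the inner products is exactly the Jolis formula above (together with some weak convergence of the signed measures $\nu_{f_n}$, which is itself delicate). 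But once you invoke that formula, the paper's two integrations by parts are available and the approximation scheme becomes unnecessary. Your indicator computation and bilinear extension to step functions are fine; it is the passage to general BV pairs that does not close under Hypothesis~\ref{hypthe1} alone.
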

Proof of Theorem~\ref{prop 2-1} will be given in Section~\ref{sec-2}. It is evident that if Hypothesis~\ref{hypthe1} ($H_2$) is substituted by ($H_2'$), the condition for $g$ in Theorem~\ref{prop 2-1} is always satisfied, see \cite{Foll 99, tao2011}.  We restate it as a corollary.

 \begin{corollary} \label{cor2-2}
If the covariance function $R(t,s)$ also satisfies Hypothesis~\ref{hypthe1} ($H_2'$),  then  $\forall f,\, g\in \mathcal{V}_{[0,T]}$,
  \begin{align} \label{diyierdengshi}
 \langle f,\,g \rangle_{\FH}-  \langle f,\,g \rangle_{\FH_1}&=\int_{[0,T]^2}  {f}(t)g(s)\frac{\partial}{\partial s}\Big(\frac {\partial R(s,t)}{\partial t} - \frac {\partial R^{B}(s,t)}{\partial t}\Big)  \dif t \dif s.
 \end{align} 
In addition, if the following Hypothesis~($H_3$) is also satisfies:
\begin{enumerate}  \item[($H_3$)] there exists a positive constant $C$ independent {of} $T$ such that  \begin{align}\label{bijiao 00} \Bigg| \frac{\partial}{\partial s}\Big(\frac {\partial R(s,t)}{\partial t} - \frac {\partial R^{B}(s,t)}{\partial t}\Big)\Bigg|  & \le  C   (ts )^{H-1},\quad a.s. 
\end{align} 
holds.
\end{enumerate}
Then we have
\begin{equation}\label{inequality 29}
\abs{\langle f,\,g \rangle_{\mathfrak{H}} - \langle f,\,g \rangle_{\mathfrak{H}_1}}\leq  C \nu(\abs{f})\nu(\abs{g}),
\end{equation} 
where 
%\begin{align}\label{H2 defn}
%\langle f,\,g \rangle_{\mathfrak{H}_2}&=C \int_{[0,T]^{2}} \abs{f(t) g(s)}(ts)^{H-1}\dif t  \dif s =C \mu(\abs{f})\mu(\abs{g}), 
%\end{align} with 
$\nu(f)=\int_0^T f\dif\nu$ and $\nu(\dif x)=x^{H-1}\dif x$ and $C$ is a positive constant independent {of} $T$. 
\end{corollary}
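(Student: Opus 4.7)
The plan is to deduce both conclusions of Corollary~\ref{cor2-2} as specializations of Theorem~\ref{prop 2-1}, so no independent analysis of the inner product structure is needed.

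For the identity \eqref{diyierdengshi}, I would first observe that Hypothesis~($H_2'$) ensures that, for every fixed $t\in[0,T]$, the difference $\frac{\partial R(\cdot,t)}{\partial t}-\frac{\partial R^B(\cdot,t)}{\partial t}$ is an absolutely continuous function of $s$, hence continuous on $[0,T]$. In particular it has no discontinuity points at all, so the hypothesis of Theorem~\ref{prop 2-1} — that $g$ share no discontinuity points with this difference function — is trivially satisfied for every $g\in\mathcal{V}_{[0,T]}$. By the fundamental theorem of calculus for absolutely continuous functions (see \cite{Foll 99}), the associated Lebesgue-Stieltjes measure $\mu(t,\dif s)$ is absolutely continuous with respect to Lebesgue measure on $[0,T]$, with Radon-Nikodym density
\[
\frac{\partial}{\partial s}\Bigl(\frac{\partial R(s,t)}{\partial t}-\frac{\partial R^B(s,t)}{\partial t}\Bigr).
\]
Substituting this into \eqref{diyierdengshi-yiban} immediately yields \eqref{diyierdengshi}.

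For the bound \eqref{inequality 29}, I would simply take absolute values inside the double integral in \eqref{diyierdengshi}, apply the pointwise estimate~($H_3$), and separate the variables by Fubini's theorem:
\[
\bigl|\langle f,g\rangle_{\FH}-\langle f,g\rangle_{\FH_1}\bigr|
\le C\int_0^T\!\!\int_0^T |f(t)|\,|g(s)|\,(ts)^{H-1}\,\dif s\,\dif t
= C\Bigl(\!\int_0^T |f(t)|\,t^{H-1}\dif t\Bigr)\Bigl(\!\int_0^T |g(s)|\,s^{H-1}\dif s\Bigr),
\]
which, by the definition of the measure $\nu(\dif x)=x^{H-1}\dif x$, is exactly $C\,\nu(|f|)\,\nu(|g|)$. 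The constant $C$ inherits independence of $T$ from the one in ($H_3$).

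There is essentially no hard step here; the proof is a routine packaging of Theorem~\ref{prop 2-1} under strengthened hypotheses. The one place that deserves a careful sentence is the verification that ($H_2'$) automatically disposes of the ``no common discontinuities'' assumption on $g$ in Theorem~\ref{prop 2-1}, which would otherwise limit the class of admissible $g$. Beyond that, the derivation of \eqref{inequality 29} is a direct Fubini–Tonelli factorization of a positive double integral with a product kernel.
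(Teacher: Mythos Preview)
Your proposal is correct and matches the paper's own argument essentially verbatim: the paper also notes that under ($H_2'$) the difference function is absolutely continuous so the discontinuity condition on $g$ in Theorem~\ref{prop 2-1} is vacuous, and then states that \eqref{inequality 29} ``follows from the modulus inequality of integration.'' There is nothing to add.
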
 
The inequality \eqref{inequality 29} follows from the modulus inequality of integration.

In the rest of this subsection, we will use Example~\ref{exmp lizi001}  to illustrate how to deal with the case when the difference \eqref{cha-key}
% \begin{align}\label{cha-hanshu} \frac {\partial R(\cdot,t)}{\partial t} - \frac {\partial R^{B}(\cdot,t)}{\partial t}\end{align}
 is not absolutely continuous. %, we will consider the same problems as in the previous subsection but without Hypothesis ($H_2'$). %However, for simplicity,    
It is clear that for the process $(G)$ of Example~\ref{exmp lizi001} and for any fixed $t\in [0,T]$, the difference
\begin{align}\label{cha-hanshu-juti}
  \frac{\partial R(s,t)}{\partial t  }- \frac{\partial  R^B(s,t)}{\partial t  }=\left\{
      \begin{array}{ll}
0, & \quad 0<s\le t,\\
-H t^{2H-1}, & \quad t<s\le T,
 \end{array}
\right. 
\end{align} 
is a step function of $s\in [0,T]$. Hence Hypothesis ($H_2'$) fails. However, this step function is a normalized bounded variation function. Hence, it follows from Theorem~\ref{prop 2-1} that the following quantified relation between $\FH$ and $\FH_1$ holds: if $f,\, g\in \mathcal{V}_{[0,T]}$ and $g(\cdot)$ has no common discontinuous points with the difference function \eqref{cha-hanshu-juti} then the following equation holds: 
 \begin{align}\label{guanjiandengshi 000-new1}
 \langle f,\,g \rangle_{\FH}-  \langle f,\,g \rangle_{\FH_1}&=-H \int_0^T {f}(t) g(t)t^{2H-1}\dif t . \end{align}
% Please refer to the integral by parts formula \eqref{01001-new-00} for the pure discontinuous measure. 
 A direct corollary of the identity \eqref{guanjiandengshi 000-new1} is the following interesting fact: if the intersection of the supports of $f$ and $g$ has zero Lebesgue measure, then 
 \begin{align} \label{innp fg3-zhicheng0-00}
\langle f,\,g \rangle_{\FH}=\langle f,\,g \rangle_{\FH_1}=\int_{[0,T]^2}  f(t)g(s) \frac{\partial^2 R^{B}(t,s)}{\partial t\partial s} \dif t   \dif s. %\quad , \red{f,g\in \FH??}.
\end{align} 
In fact, this identity can also be derived from the following basic relationship: for $0 \leq a < b \leq c < d \leq T$, we have 
  \begin{align} \label{innp fg3-zhicheng0 tebie 0000}
  \mathbb{E}[(G_b-G_a)(G_d-G_c)] = \mathbb{E}[(B^H_b-B^H_a)(B^H_d-B^H_c)] .
\end{align}
Clearly, the above two identities do not hold any more if the intersection of the supports of $f$ and $g$ has nonzero Lebesgue measure.

% which makes us give some new or improving results.  
% For simplicity, we only state these applications under some convenient sub-classes of Hypothesis ($H_2$) for these different models. We leave the problem of extension to the general Hypothesis ($H_2$) to the readers.
% In this and the next two subsections, we will apply Theorem~\ref{prop 2-1} or Corollary~\ref{cor2-2} to several kinds of models. Proofs of these results are postponed to Section~\ref{sec3 applications}. 

%In this subsection, we always suppose that the Gaussian process $(G_t)_{t\in[0,T ]}$ satisfies Hypothesis ($H_2'$)-($H_3$). 

\subsection{Application to Gladyshev's theorem}
In this and the following subsection, we will present two applications of Theorem~\ref{prop 2-1} or Corollary~\ref{cor2-2}. Unless a special announcement, we always suppose that either the fractional Gaussian process $(G)$ satisfies Hypothesis~ ($H_2'$)-($H_3$) or is given as in Example~\ref{exmp lizi001} .
%The first one is the following Gladyshev's theorem.  In this subsection, we always suppose that either the fractional Gaussian process $(G)$ satisfies Hypothesis~ ($H_2'$)-($H_3$) or is given as in Example~\ref{exmp lizi001}.
\begin{theorem}\label{Gladyshev theorem}
The Gladyshev's theorem holds, i.e., the relation
\begin{align}\label{gladshev thm00}
\lim_{n\to\infty}\left(\frac{2^n}{T}\right)^{2H-1}\sum_{k=1}^{2^n} \left[G\left(\frac{kT}{2^n}\right)-G\left(\frac{(k-1)T}{2^n}\right)\right]^2= T
\end{align}
holds with probability 1.
\end{theorem}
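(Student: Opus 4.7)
The overall strategy is to compare the dyadic quadratic variation of $G$ with that of fBm, using the identity \eqref{innp fg3-zhicheng0 tebie 0000} to eliminate every off-diagonal contribution in the comparison. Introduce the increments $\Delta_k^{n,G}:=G(kT/2^n)-G((k-1)T/2^n)$ and $\Delta_k^{n,B}:=B^H(kT/2^n)-B^H((k-1)T/2^n)$, set $S_n^G:=\sum_{k=1}^{2^n}(\Delta_k^{n,G})^2$, $S_n^B:=\sum_{k=1}^{2^n}(\Delta_k^{n,B})^2$, and $\alpha_n:=(2^n/T)^{2H-1}$. Since the classical Gladyshev theorem for fBm already gives $\alpha_n S_n^B\to T$ almost surely, it suffices to prove $\alpha_n(S_n^G-S_n^B)\to 0$ a.s. For $j\neq k$ the dyadic intervals share at most one endpoint, so \eqref{innp fg3-zhicheng0 tebie 0000} yields $\mathbb{E}[\Delta_j^{n,G}\Delta_k^{n,G}]=\mathbb{E}[\Delta_j^{n,B}\Delta_k^{n,B}]$, which kills all off-diagonal terms in both the mean and variance comparisons below and reduces the analysis to the diagonal $j=k$.

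For the mean, off-diagonal cancellation gives
\[\mathbb{E}[S_n^G-S_n^B]=\sum_{k=1}^{2^n}\bigl(\langle\mathbf{1}_{I_k},\mathbf{1}_{I_k}\rangle_{\FH}-\langle\mathbf{1}_{I_k},\mathbf{1}_{I_k}\rangle_{\FH_1}\bigr),\qquad I_k:=\left[\tfrac{(k-1)T}{2^n},\tfrac{kT}{2^n}\right].\]
Under Hypothesis ($H_2'$)-($H_3$), Corollary~\ref{cor2-2} bounds each summand by $C\nu(\mathbf{1}_{I_k})^2$ with $\nu(\mathbf{1}_{I_k})=H^{-1}\bigl[(kT/2^n)^H-((k-1)T/2^n)^H\bigr]$, and direct summation yields $\alpha_n|\mathbb{E}[S_n^G-S_n^B]|\to 0$. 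In the setting of Example~\ref{exmp lizi001}, the variance difference is computed explicitly from \eqref{cova func 01} and, after summation, telescopes to $-\tfrac{1}{2}T^{2H}$, so $\alpha_n|\mathbb{E}[S_n^G-S_n^B]|=O((2^n)^{2H-1})\to 0$ since $H<1/2$ there. For the variance, Isserlis' formula plus the same cancellation yields
\[\mathrm{Var}(S_n^G)=\mathrm{Var}(S_n^B)+2\sum_{k=1}^{2^n}\Bigl([\mathrm{Var}(\Delta_k^{n,G})]^2-[\mathrm{Var}(\Delta_k^{n,B})]^2\Bigr);\]
the classical estimate makes $\alpha_n^2\mathrm{Var}(S_n^B)$ decay geometrically in $n$ for every $H\in(0,1)$, and the diagonal correction is controlled by the Step 1 bounds combined with $\mathrm{Var}(\Delta_k^{n,G})=O((T/2^n)^{2H})$, giving $\mathrm{Var}(\alpha_n S_n^G)=O(2^{-n\gamma})$ for some $\gamma>0$.

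For the almost sure convergence, I would use that $\alpha_n(S_n^G-\mathbb{E}[S_n^G])$ belongs to the second Wiener chaos and apply Gaussian hypercontractivity to obtain $\mathbb{E}|\alpha_n S_n^G-\alpha_n\mathbb{E}[S_n^G]|^p\le C_p\mathrm{Var}(\alpha_n S_n^G)^{p/2}$ for every $p\ge 2$; picking $p$ large enough that $\sum_n\mathrm{Var}(\alpha_n S_n^G)^{p/2}<\infty$, Chebyshev and Borel-Cantelli deliver $\alpha_n S_n^G-\alpha_n\mathbb{E}[S_n^G]\to 0$ a.s., and combining with the mean step gives \eqref{gladshev thm00}. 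The main obstacle is the boundary case $k=1$ in the diagonal correction: the pointwise bound \eqref{bijiao 00} is singular as $s\to 0$, but the weight $\nu(\dif x)=x^{H-1}\dif x$ is integrable at zero for $H\in(0,1)$, and Corollary~\ref{cor2-2} has this weight already built into its right-hand side, so the boundary contribution is automatically absorbed.
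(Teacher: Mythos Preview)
Your argument has a genuine gap in the variance step under Hypothesis~($H_2'$)--($H_3$). The identity \eqref{innp fg3-zhicheng0 tebie 0000} on which your off-diagonal cancellation rests is derived in the paper \emph{only} for Example~\ref{exmp lizi001}, as a consequence of the specific form \eqref{guanjiandengshi 000-new1}. For a general process satisfying ($H_2'$)--($H_3$), Corollary~\ref{cor2-2} gives
\[
\langle \mathbf{1}_{I_j},\mathbf{1}_{I_k}\rangle_{\FH}-\langle \mathbf{1}_{I_j},\mathbf{1}_{I_k}\rangle_{\FH_1}
=\int_{I_j\times I_k}\frac{\partial}{\partial s}\Bigl(\frac{\partial R}{\partial t}-\frac{\partial R^B}{\partial t}\Bigr)\,\dif t\,\dif s,
\]
which is typically nonzero even when $I_j$ and $I_k$ are disjoint; for instance, for the sub-fractional Brownian motion of Example~\ref{exmp0001-5} the integrand is (a multiple of) $(t+s)^{2H-2}$, which never vanishes. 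Hence your displayed formula $\mathrm{Var}(S_n^G)=\mathrm{Var}(S_n^B)+2\sum_k\bigl([\mathrm{Var}(\Delta_k^{n,G})]^2-[\mathrm{Var}(\Delta_k^{n,B})]^2\bigr)$ is false in this generality: the full double sum $\sum_{j\neq k}\bigl[(\E[\Delta_j^{n,G}\Delta_k^{n,G}])^2-(\E[\Delta_j^{n,B}\Delta_k^{n,B}])^2\bigr]$ must be controlled separately, and you have not done this. (Your mean step is actually fine and needs no off-diagonal cancellation, since $\E[S_n^G]$ is already a diagonal sum by definition; the phrase ``off-diagonal cancellation gives'' there is harmless but misleading.) A secondary issue: the sentence ``it suffices to prove $\alpha_n(S_n^G-S_n^B)\to 0$ a.s.'' is not well posed, as $G$ and $B^H$ need not live on the same probability space; fortunately your actual argument works with $\E[S_n^G]$ and $\mathrm{Var}(S_n^G)$ separately and does not use any coupling.

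The paper avoids this difficulty by taking a different route: rather than comparing $S_n^G$ with $S_n^B$ directly, it verifies the abstract hypotheses (a)--(c) of Norvai\v{s}a's version of Gladyshev's theorem. Condition (b) is checked via the pointwise bound \eqref{piandaoshu shangjie} on $\partial^2 R/\partial s\partial t$ obtained from ($H_3$) and the triangle inequality, and condition (c) is exactly Proposition~\ref{coro 2-2}. Your approach could in principle be repaired by bounding the off-diagonal correction using $|\langle \mathbf{1}_{I_j},\mathbf{1}_{I_k}\rangle_{\FH}-\langle \mathbf{1}_{I_j},\mathbf{1}_{I_k}\rangle_{\FH_1}|\le C\,\nu(\mathbf{1}_{I_j})\nu(\mathbf{1}_{I_k})$ from \eqref{inequality 29} and summing, but that additional estimate is the missing ingredient.
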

Theorem~\ref{Gladyshev theorem} is new in the following sense: the continuity of $\frac{\partial^2 R(s,t)}{\partial s\partial t}$ on $U:=\set{(s,t)\in(0,T]\times (0,T];\,s\neq t}$ is not assumed when $(G)$ satisfies Hypothesis ($H_2'$)-($H_3$), although Example~\ref{exmp lizi001} satisfies all the conditions of Gladyshev's Theorem, \cite{Nor 11}. The reason for limiting the generality is that when dealing with a fractional Gaussian processes that ($H_2'$) fails but ($H_2$) is valid, it is preferable to find a suitable upper bound for the total variation function of the difference \eqref{cha-key} in each specific case. Although we do not find other unartifical examples {in the literature where} ($H_2'$) fails but ($H_2$) is valid, we believe that the same idea can be applied to other fractional Gaussian processes in the future.

\subsection{Application to the statistical estimation of ergodic fractional Ornstein-Uhlenbeck process}
The second application is the statistical estimation of the ergodic OU process $(\zeta_t )_{t \in [0,T]}$ which is defined by the stochastic differential equation
\begin{align}
   \dif \zeta_t &=-\theta \zeta_t \dif t +\dif  G_t, \quad \zeta_0 = 0. \label{OU dingyi} 
\end{align} %For the application is to the model $\zeta_t $, we suppose the fractional Gaussian process $G$ satisfies Hypothesis ($H_2'$)-($H_3$).
Suppose that the whole trajectory of  $(\zeta )$ is observed continuously on $[0, T]$. When $\theta>0$,  i.e., in the ergodic case, the informal least squares estimator (LSE) and the moment estimator of the parameter $\theta$ are respectively constructed from the continuous observations as follows:
 \begin{align}
\hat{\theta}_T&=-\frac{``\int_0^T  \zeta_t\mathrm{d} \zeta_t"}{\int_0^T  \zeta_t^2\mathrm{d} t}=\theta-\frac{\int_0^T  \zeta_t\mathrm{d}G_t}{\int_0^T  \zeta_t^2\mathrm{d} t}, \label{hattheta}\\
\tilde{\theta}_{T}&=\Big( \frac{1}{ {H} \Gamma(2H ) T} \int_0^T  \zeta_t^2\mathrm{d} t \Big)^{-\frac{1}{2H}},\label{theta tilde formula}
\end{align}
where the integral with respect to $G_t$ is interpreted in the Skorohod sense, also known as a divergence-type integral, see \cite{HN 10, HNZ 19}. 

The aim of this subsection is to derive the Berry-Ess\'{e}en type upper bound for the two estimators (In fact, the LSE is informal and not a real statistical estimator). We utilize the methods presented in  \cite{BBES 23} and \cite{Chenzhou2021} respectively. {The main idea is to establish a connection between the Berry-Ess\'{e}en type upper bound of the two estimators and that of the following} chaos process $W_T$:
\begin{notation}
Let $Z\sim N(0,\sigma^2_B)$ and consider a chaos process:  
\begin{align} 
W_T&=\frac{1}{\sqrt {T}}\int_{0}^T \big( \zeta_t^2 -\E[  \zeta_t^2] \big)\dif t,\label{wt dyi} \end{align} 
which belongs to the second Wiener chaos (i.e., Wiener-It\^o multiple integrals) with respect to $(G)$, where 
\begin{align}
\sigma^2_B&={\theta^{-1-4H}} {\big(H\Gamma(2H)\big)^2}  \sigma^2_H, \label{sb2 dyi}\\ %\big[ (4H-1) +   \frac{2 \Gamma(2-4H)\Gamma(4H)}{\Gamma(2H)\Gamma(1-2H)}\big]. 
\sigma^2_H&= (4H-1) +   \frac{2 \Gamma(2-4H)\Gamma(4H)}{\Gamma(2H)\Gamma(1-2H)}.
\end{align}
\end{notation}
%Since the Berry-Ess\'{e}en type upper bounds that we obtain are different for the fractional Gaussian process $(G)$ satisfying Hypothesis~ ($H_2'$)-($H_3$) and Example~\ref{exmp lizi001}, we state them separately. 

%\subsubsection{The case of Hypothesis ($H_2'$)-($H_3$)}
%In this sub-subsection, we suppose that the fractional Gaussian process $(G)$ satisfies Hypothesis~ ($H_2'$)-($H_3$).  
\begin{theorem}\label{coro key guanj}
When $H\in (0,\frac12)$, the total variation distance between the random variables $W_T$ and $Z$ satisfies
\begin{align}\label{qbcha juli}
d_{TV}(W_T,\,Z)\le C\times \frac{1}{\sqrt{T}}.\end{align}
\end{theorem}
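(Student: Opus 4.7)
The plan is to reduce the claim to quantitative cumulant estimates for a second Wiener chaos and then to transfer those estimates from $\FH$ to the fBm reproducing kernel space $\FH_1$ by means of Corollary~\ref{cor2-2}.

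First I would represent $W_T$ as a double Wiener integral with respect to $(G)$. Writing the solution of \eqref{OU dingyi} in Skorohod form as $\zeta_t = \int_0^t e^{-\theta(t-s)}\dif G_s$ yields
\begin{align*}
\zeta_t^2 - \E[\zeta_t^2] = I_2^G(e_t\otimes e_t),\qquad e_t(u) := e^{-\theta(t-u)}\1_{[0,t]}(u),
\end{align*}
so that $W_T = I_2^G(f_T)$ with the symmetric kernel $f_T = T^{-1/2}\int_0^T e_t\otimes e_t\,\dif t \in \FH^{\otimes 2}$. The Nourdin--Peccati total variation bound for second-chaos random variables then gives
\begin{align*}
d_{TV}(W_T, Z)\le C\Bigl(\abs{\E[W_T^2]-\sigma_B^2} + \sqrt{\kappa_4(W_T)}\Bigr),
\end{align*}
with $\kappa_2(W_T) = 2\norm{f_T}_{\FH^{\otimes 2}}^2$ and $\kappa_4(W_T) = 48\norm{f_T\otimes_1 f_T}_{\FH^{\otimes 2}}^2$, so it suffices to show that both right-hand side terms are $O(T^{-1/2})$.

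The key step is to replace every $\FH$-inner product of OU kernels arising in the expansions of $\kappa_2$ and $\kappa_4$ by the corresponding $\FH_1$-inner product plus a correction controlled, via \eqref{inequality 29}, by the weighted measure $\nu(\dif x)=x^{H-1}\dif x$. The leading $\FH_1$-contribution reproduces the fBm-driven ergodic OU computation carried out in \cite{Chenzhou2021}, which yields the required decay for $\abs{\E[W_T^2]-\sigma_B^2}$ and $\kappa_4(W_T)$. For the correction terms I would combine the $\nu$-norm estimate $\nu(\abs{e_t})\le C$ (a convolution of $e^{-\theta\cdot}$ with $x^{H-1}$) with the cumulant estimates collected in Section~\ref{appendx} and Laplace-type time integrals such as $\int_0^T\!\int_0^T e^{-\theta\abs{t-s}}(ts)^{H-1}\dif s\dif t$.

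The principal obstacle lies in the fourth cumulant: its explicit expansion is a fourfold time integral involving four $\FH$-inner products of kernels $e_{t_i}$, and inserting Corollary~\ref{cor2-2} into each produces $2^4$ mixed terms, of which only the pure $\FH_1$-term is directly comparable to the fBm case in \cite{Chenzhou2021}. For $H\in(0,\tfrac12)$ the density $x^{H-1}$ is singular at the origin, and establishing the $O(T^{-1})$ rate for every mixed term --- where this singularity interacts with the exponential decay of the OU kernel --- is the most delicate technical step, and is precisely where the auxiliary lemmas in Section~\ref{appendx} should do the heavy lifting.
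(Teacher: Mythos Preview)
Your plan to represent $W_T$ as $I_2^G(f_T)$ and to compare the covariances $\langle e_s,e_t\rangle_{\FH}$ with $\langle e_s,e_t\rangle_{\FH_1}$ via Corollary~\ref{cor2-2} is exactly the mechanism the paper uses: Proposition~\ref{key prop bijiao} bounds $\rho_1(t,s)-\rho_2(t,s)$ with $\rho_i(t,s)=\langle e_s,e_t\rangle_{\FH_i}$, and Propositions~\ref{sijiejun juli qidian}--\ref{sijiejun juli} then expand the cumulant integrals into the same mixed terms you describe. So on that level you are aligned with the paper.

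The genuine gap is the Stein bound you invoke. You use the \emph{classical} Nourdin--Peccati estimate $d_{TV}(W_T,Z)\le C\bigl(\lvert\E[W_T^2]-\sigma_B^2\rvert+\sqrt{\kappa_4(W_T)}\bigr)$ and therefore need $\kappa_4(W_T)=O(T^{-1})$. But the auxiliary lemmas in Section~\ref{appendx} that you appeal to only yield $\kappa_4(W_T)\le C/\sqrt{T}$ (Proposition~\ref{sijiejun juli}, which in turn rests on the bound $\lvert\kappa_4(D_T-\E D_T)\rvert\le C/\sqrt{T}$ from \cite{BBES 23}); plugging this into your inequality gives only $d_{TV}=O(T^{-1/4})$. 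The paper closes this gap not by sharpening $\kappa_4$ but by switching to the \emph{optimal} fourth moment theorem of \cite{NP 15}, namely $d_{TV}(W_T,Z)\le C\max\{\lvert\kappa_3(W_T)\rvert,\kappa_4(W_T)\}$, which requires in addition the third-cumulant estimate $\lvert\kappa_3(W_T)\rvert\le C/\sqrt{T}$ of Proposition~\ref{sanjiejun juli}; you make no mention of $\kappa_3$. (A secondary point: the fBm baseline for $H\in(0,\tfrac12)$ is \cite{BBES 23}, not \cite{Chenzhou2021}, which treats $H\in(\tfrac12,\tfrac34]$.) To repair your argument you must either prove the sharper $O(T^{-1})$ rate for $\kappa_4$ directly, or---as the paper does---bring in $\kappa_3$ and invoke the optimal theorem.
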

\begin{remark}
The case of $H\in (\frac12, \frac34]$ has been treated in \cite{Chenzhou2021}.
\end{remark}

% The asymptotic growth of the covariance functions $\E(G_s\zeta_t),\,\E(G_t\zeta_t)$ can be obtained easily under Hypothesis  ($H_2'$)-($H_3$).

\begin{theorem}\label{B-E bound thm} 
Suppose the fractional Gaussian process $(G)$ satisfies Hypothesis ($H_2'$)-($H_3$).
Let $\Phi(z)$ denote the standard normal distribution function. %Let $Z$ be a standard Gaussian random variable.
If $H\in (0,\,\frac12)$ then there exists a positive constant $C_{\theta, H}$ such that when $T$ is sufficiently large,
\begin{equation}\label{b-e bound 44}
\sup_{z\in \Rnum}\abs{P(\sqrt{\frac{4H^2 T}{\theta \sigma^2_{H}}} (\tilde{\theta}_T-\theta )\le z)-\Phi( z)}\le\frac{ C_{\theta, H}}{\sqrt{T  }},
\end{equation}and 
\begin{equation}\label{b-e bound 34}
\sup_{z\in \Rnum}\abs{P(\sqrt{\frac{T}{\theta \sigma^2_{H}}} (\hat{\theta}_T-\theta )\le z)-\Phi( z)}\le\frac{ C_{\theta, H}}{\sqrt{T} }.
\end{equation}
\end{theorem}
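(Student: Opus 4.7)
The plan is to reduce both Berry--Ess\'een bounds to Theorem~\ref{coro key guanj} by writing each normalised estimator as $-W_T/\sigma_B$ plus a small remainder, then transferring the $1/\sqrt T$ rate via a perturbation lemma that exploits the Wiener-chaos structure of the summands.

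For the moment estimator $\tilde\theta_T$, I would set $Y_T := \frac{1}{H\Gamma(2H)T}\int_0^T \zeta_t^2\,\dif t$, so that $\tilde\theta_T = Y_T^{-1/(2H)}$. Writing $\zeta_t=\int_0^t e^{-\theta(t-s)}\,\delta G_s$ as a first-chaos element of $(G)$, Corollary~\ref{cor2-2} together with Hypothesis~$(H_3)$ lets one compute $\E[\zeta_t^2]=\theta^{-2H}H\Gamma(2H)+O(1)$ uniformly in $t$, hence $\E Y_T=\theta^{-2H}+O(1/T)$, and recognising the fluctuation as $W_T$ yields
\begin{equation*}
Y_T-\theta^{-2H}=\frac{W_T}{H\Gamma(2H)\sqrt T}+O_{L^2}(1/T).
\end{equation*}
A first-order Taylor expansion of $x\mapsto x^{-1/(2H)}$ at $x=\theta^{-2H}$, whose derivative there equals $-\theta^{1+2H}/(2H)$, then gives $\tilde\theta_T-\theta=-\frac{\theta^{1+2H}}{2H^2\Gamma(2H)\sqrt T}\,W_T+R_T^{(1)}$ with $\|R_T^{(1)}\|_{L^2}=O(1/T)$. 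Multiplying by $\sqrt{4H^2T/(\theta\sigma_H^2)}$ identifies the leading coefficient as exactly $-1/\sigma_B$ and leaves a residual of order $1/\sqrt T$.

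For the LSE, I would write $N_T:=\int_0^T\zeta_t\,\dif G_t$ and $D_T:=\int_0^T\zeta_t^2\,\dif t$, so that $\hat\theta_T-\theta=-N_T/D_T$. Applying the It\^o--Skorohod formula to $\zeta_T^2$ produces $\zeta_T^2 = -2\theta D_T + 2N_T + \E[\zeta_T^2]+(\text{lower-order terms})$; since $\zeta_T^2-\E[\zeta_T^2]$ is a second-chaos element of size $O_{L^2}(1)$, this rearranges to $N_T=\theta\sqrt T\,W_T + O_{L^2}(1)$. Dividing by the denominator, which concentrates at $T\theta^{-2H}H\Gamma(2H)$ with fluctuations of size $\sqrt T$, a routine expansion of $1/D_T$ then gives $\hat\theta_T-\theta=-\frac{\theta^{1+2H}}{H\Gamma(2H)\sqrt T}\,W_T+R_T^{(2)}$ with $\|R_T^{(2)}\|_{L^2}=O(1/T)$, and the normalisation $\sqrt{T/(\theta\sigma_H^2)}$ once again turns the main term into $-W_T/\sigma_B$.

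To finish, I would invoke a standard perturbation lemma: for $X_T=-W_T/\sigma_B+r_T$ with $\|r_T\|_{L^p}\le C_p/\sqrt T$ for every $p\ge 2$, the Kolmogorov distance satisfies $d_{\mathrm{Kol}}(X_T,Z/\sigma_B)\le d_{\mathrm{Kol}}(-W_T/\sigma_B,Z/\sigma_B)+C/\sqrt T$, and the first summand is controlled by Theorem~\ref{coro key guanj} via $d_{\mathrm{Kol}}\le d_{TV}$. The main obstacle is precisely this step: the $L^2$ bound $\|r_T\|_{L^2}=O(1/T)$ alone would yield only a $T^{-1/3}$ Kolmogorov rate via Chebyshev. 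The rescue is that every random variable in the decomposition lives in a finite sum of Wiener chaoses of $(G)$, so hypercontractivity upgrades $L^2$ to $L^p$ for every $p$ without loss, after which a Kim--Park type inequality preserves the optimal $1/\sqrt T$ rate. A secondary but essential ingredient is that every covariance computation involving $\zeta$ must be transported from the fBm OU process to the $(G)$-driven one via Corollary~\ref{cor2-2}, with Hypothesis~$(H_3)$ ensuring that the correction is of the right order in $T$.
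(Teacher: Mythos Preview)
Your high-level strategy---reduce both Kolmogorov bounds to the $d_{TV}$ bound on $W_T$ from Theorem~\ref{coro key guanj}---is exactly what the paper does. The paper, however, does not redo the reduction: it imports black-box inequalities from \cite{chengu2021} (for $\tilde\theta_T$) and \cite{BBES 23} (for $\hat\theta_T$) of the shape
\[
d_{\mathrm{Kol}}\bigl(\text{normalised estimator},\,N(0,1)\bigr)\le C\Bigl[d_{\mathrm{Kol}}(W_T,Z)+e_T+T^{-1/2}\Bigr],
\]
with $e_T=\sqrt T\,\bigl|T^{-1}\!\int_0^T\E\zeta_t^2\,\dif t-a\bigr|$ for the moment estimator and $e_T=|\E W_T^2-\sigma_B^2|$ for the LSE, and then checks $e_T\le C/\sqrt T$ via Proposition~\ref{key prop bijiao} and Proposition~\ref{sijiejun juli qidian}.

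Your from-scratch reduction has a real gap in the perturbation step for $\tilde\theta_T$. The Lagrange remainder $R_T^{(1)}$ carries $f''(\xi)$ at a \emph{random} intermediate point $\xi$ between $Y_T$ and $\theta^{-2H}$, so $R_T^{(1)}$ is not a finite sum of Wiener chaoses and hypercontractivity does not apply to it as you claim. Even where the pieces are genuinely low-chaos (the LSE), $L^p$ control for all $p$ combined with Chebyshev and Gaussian anti-concentration gives only $T^{-1/2+o(1)}$ after optimising $\epsilon$, not the sharp $T^{-1/2}$; and the Kim--Park inequality you invoke is designed for \emph{ratios} $F/G$ of chaos elements, so it handles $\hat\theta_T-\theta=-N_T/D_T$ directly but not the nonlinear composition $Y_T^{-1/(2H)}$. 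The clean fix for the moment estimator---and what underlies the cited result---is to invert at the CDF level: write $P(\tilde\theta_T\le\theta+z/c_T)=P\bigl(Y_T\ge(\theta+z/c_T)^{-2H}\bigr)$ and Taylor-expand the deterministic threshold rather than the random variable, which keeps everything in the second chaos. A side remark: ``$\E[\zeta_t^2]=a+O(1)$ uniformly'' is vacuous and does not yield $\E Y_T=\theta^{-2H}+O(1/T)$; what you actually need is $|\E\zeta_t^2-a|\le C(1\wedge t^{2(H-1)})$, which follows from the fBm case together with the comparison \eqref{zeta2 eta2 cha kongzhi}.
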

\begin{remark}
\begin{enumerate}
  \item[(1)] When the Gaussian process $(G)$ in equation \eqref{OU dingyi} degenerates to the fBm, the same problem has been studied in the literature \cite{chenkl2020, chenl2021},  and the improving upper bound has recently been obtained in \cite{BBES 23, chengu2023}. 
    \item[(2)] Theorem~\ref{B-E bound thm} improves the results of \cite{chengu2021} where they show that when $H\in (0,\frac38)$, the upper bound is $\frac{C}{T^{\frac12 \wedge (\frac32-4H)}}$. In fact, this is one of the motivations of the present paper.
    \item[(3)]  The case of $H\in (\frac12, \frac34]$ has been treated in \cite{Chenzhou2021}. The methods used are based on \cite{Kim17} when $H\in (\frac12, \frac34]$ and on \cite{BBES 23} when $H\in (0,\frac12)$, respectively. The two methods are different significantly.
    \end{enumerate}
\end{remark}
% Theorem~\ref{B-E bound thm} are based on the following theorem, which has its own independent interest.

%\subsubsection{The case of Example~\ref{exmp lizi001}} \label{ just example}
%In this sub-subsection, we suppose the fractional Gaussian process $(G)$ is given as in Example~\ref{exmp lizi001}. 
%\begin{theorem}\label{new-thm-000}
%Suppose that the Gaussian process $(G)$ is given in Example~\ref{exmp lizi001}. Then  We have
%\begin{enumerate}
%  \item[(i)] The total variation distance between the variables $W_T$ and $Z$  satisfies the inequality: \begin{align}\label{qbcha juli-exmp 1-2} d_{TV}(W_T,\,Z)\le C\times \frac{1}{ {T}^{\frac12 \wedge ( )}}.\end{align}
% \item[(ii)] The upper bounds in \eqref{b-e bound 44} and \eqref{b-e bound 34} is $C\times \frac{1}{ {T}^{\frac12 \wedge ( )}}$ now.
%  \end{enumerate}
% \end{theorem}
%To check the proofs of Theorems~\ref{coro key guanj} and \ref{B-E bound thm}, we find that f
When the Gaussian process $(G)$ is given in Example~\ref{exmp lizi001}, we cannot obtain a good Berry-Ess\'{e}en upper bound for the moment estimate $\tilde{\theta}_{T}$ due to  the bound of 
\begin{align*}
 \abs{\frac{1}{ \sqrt {T}}\int_{0}^T  \E[  \zeta_t^2-\eta^2_t] \dif t  }\le CT^{2H-\frac12}
\end{align*}
not being good enough, where $\eta_t$ is given in \eqref{OU dingyi duibi}. However, for the  informal LSE  $\hat{\theta}_{T}$, we can give an upper bound of $$ \frac{C}{ {T}^{ (1-2H)\wedge\frac12} },$$ as shown in Proposition~\ref{sijiejun juli qidian} and the proof of Theorem~\ref{B-E bound thm}. These facts illustrate the essential difference between the fractional Gaussian process $(G)$ given in Example~\ref{exmp lizi001} and the standard fBm, despite their similar covariance functions. We will investigate this topic further in future works.
 %%%%%%%%%%%%%%%%%%
%\section{Proof of the main results}
\section{The quantified relation between the associated reproducing kernel Hilbert spaces of  $G$ and $B^H$}\label{sec-2}
Let $G = \set{G_t, t\in [0,T ]}$ be a continuous centred Gaussian process with covariance function $R(s,t)$
defined on a complete probability space $(\Omega, \mathcal{F}, P)$. The filtration $\mathcal{F}$ is generated by the Gaussian family $G$. Suppose in addition that the covariance function $R(s,t)$ is continuous.
Let $\mathcal{E}$ denote the space of all real-valued step functions on $[0,T]$. The associated reproducing kernel  Hilbert space $\mathfrak{H}$ is defined
as the closure of $\mathcal{E}$ endowed with the inner product
\begin{align*}
\innp{\mathbbm{1}_{[a,b]},\,\mathbbm{1}_{[c,d]}}_{\FH}=\E\big(( G_b-G_a) ( G_d-G_c) \big).
\end{align*}
By abuse of notation, we use the same symbol 
\begin{align}
G=\left\{G(h)=\int_{[0,T]}h(t)\dif G_t, \quad h \in \mathfrak{H}\right\}\end{align} to denote the isonormal Gaussian process on the probability space $(\Omega, \mathcal{F}, P)$, indexed by the elements in the Hilbert space $\mathfrak{H}$. In other words, $G$ is a Gaussian family of random variables such that 
\begin{align}\label{G extension defn}
 \mathbb{E}(G(h)) = 0, \qquad \mathbb{E}(G(g)G(h)) = \langle g, h \rangle_{\mathfrak{H}}, \quad
\forall g, h \in \mathfrak{H}.   
\end{align}Here the second identity is called It\^{o}'s isometry.
In particular, when $G$ is exact the fBm $B^H$, we denote by $\mathfrak{H}_1$ the associated Hilbert space with the inner product $\innp{\cdot,\,\cdot}_{\FH_1}$. 

 %Then $\mathcal{V}_{[0,T]}$ is dense in $\FH$, see \citealt{Jolis 2007}. 
For each $g\in \mathcal{V}_{[0,T]}$, {we define $\nu_{g}$ as the restriction of the Lebesgue-Stieljes signed measure associated with $g^0$ to $([0,T ], \mathcal{B}([0,T ]))$. Here, $g^0$ is defined as}:
\begin{equation*}
g^0(x)=\left\{
      \begin{array}{ll}
 g(x), & \quad \text{if } x\in [0,T],\\
0, &\quad \text{otherwise}.     
 \end{array}
\right.
\end{equation*} 
{The idea of using the measure $\nu_{g}$ is derived from \cite{Jolis 2007}. In this paper, we begin with a measure decomposition of $\nu_{g}$.} Firstly, we can decompose $g^0(x)$ as the difference of two bounded variation functions:
\begin{align}\label{basic decomp}
g^0(x)=g_1^0(x)-g_2^0(x),
\end{align}
where 
\begin{equation*}
g_1^0(x)=\left\{
      \begin{array}{ll}
      0,&\quad \text{if } x<0,\\
 g(x), & \quad \text{if } x\in [0,T],\\
g(T), &\quad \text{if } x>T,    
 \end{array}
\right.
\end{equation*}
and 
\begin{equation*}
g_2^0(x)=\left\{
      \begin{array}{ll}
      0,&\quad \text{if } x<T,\\
g(T), &\quad \text{if } x>T.    
 \end{array}
\right.
\end{equation*}
%Note that both $g_1^0$ and $g_2^0$ are 
By the near uniqueness of the Lebesgue-Stieltjes measures of bounded variation functions \cite[p.159]{tao2011}, we can assume  that $g^0_1$ and $g_2^0$ are normalized. Therefore, we have the following equation:
\begin{align}\label{ceducha}
\mu_{g^0} =\mu_{g^0_1}-\mu_{g^0_2}.
\end{align} 
It is evident  that 
\begin{equation}
\mu_{g^0_2}=g(T) \delta_T(\cdot),\label{g02 cedu}
\end{equation} 
where $\delta_T(\cdot)$ represents the Dirac measure at $T$. 

%For ease of use, we use the notation partial $\frac{\partial g}{\partial x}$ to represent the ``density function'' in the form of measure \eqref{jieshi01}.

The measure decomposition \eqref{ceducha} allows us to derive an integration by parts formula (see Lemma~\ref{partial integral}). This, combined with a result from \cite{Jolis 2007}, leads us to the following proposition, which serves as another starting point for this paper.
\begin{proposition}\label{prop 2-1010}
Let  $f,\, g\in \mathcal{V}_{[0,T]}$. If the covariance function $R(t,s)$ satisfies Hypothesis~\ref{hypthe1} ($H_1$), then 
  \begin{align} \label{innp fg3}
\langle f,\,g \rangle_{\FH}=-\int_{[0,T]^2}  f(t) \frac{\partial R(t,s)}{\partial t} \dif t  \nu_{g}(\dif s).
\end{align} 
\end{proposition}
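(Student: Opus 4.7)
The plan is to combine the Jolis (2007) bilinear representation of the RKHS inner product on bounded variation functions with the measure decomposition \eqref{ceducha} and an integration by parts. The starting point, adapted from Jolis's argument to a general continuous centred Gaussian process with $G(0)=0$, is the symmetric formula
$$\langle f, g \rangle_{\FH} = \int_{[0,T]^2} R(t,s)\, \nu_f(\dif t) \, \nu_g(\dif s)$$
for any $f, g \in \mathcal{V}_{[0,T]}$. One verifies this first on indicators via $\langle \mathbbm{1}_{[a,b)}, \mathbbm{1}_{[c,d)}\rangle_{\FH} = R(b,d)-R(b,c)-R(a,d)+R(a,c)$, extends by bilinearity to step functions, and closes by a density argument using the continuity of $R$ (which is ensured by Hypothesis~($H_1$) together with $R(0,\cdot)\equiv 0$).

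Next I would apply Fubini and freeze $s$ in order to manipulate the inner single integral $\int_{[0,T]} R(t,s)\, \nu_f(\dif t)$. Using the decomposition \eqref{ceducha} applied to $f$, i.e.\ $\nu_f = \mu_{f_1^0}\big|_{[0,T]} - f(T)\delta_T$, this integral splits as
$$\int_{[0,T]} R(t,s)\, \mu_{f_1^0}(\dif t) - f(T) R(T,s).$$
The point of isolating the $\delta_T$-component is that $f_1^0$ is right-continuous, of bounded variation on the whole real line, and vanishes on $(-\infty, 0)$, which is the proper setting in which to apply the integration by parts formula for signed Lebesgue-Stieltjes measures (Lemma~\ref{partial integral}).

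The crux of the proof is the integration by parts itself. For each fixed $s \in [0,T]$, Hypothesis~($H_1$) together with $G(0)=0$ guarantees that $R(\cdot, s)$ is absolutely continuous on $[0,T]$ with $R(0,s)=0$, so Lemma~\ref{partial integral} delivers
$$\int_{[0,T]} R(t,s)\, \mu_{f_1^0}(\dif t) = f(T) R(T,s) - \int_0^T f(t)\, \frac{\partial R(t,s)}{\partial t}\, \dif t.$$
The boundary term $f(T) R(T,s)$ cancels exactly against the $-f(T) R(T,s)$ produced by the $\delta_T$-component, and integrating the surviving term against $\nu_g(\dif s)$ yields \eqref{innp fg3}.

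The main obstacle I foresee is twofold: first, justifying the density step that extends Jolis's identity from step functions to all of $\mathcal{V}_{[0,T]}$ in our more general (non-fBm, non-stationary-increment) Gaussian setting; second, performing the endpoint bookkeeping in the integration by parts carefully enough that the boundary term at $T$ produced by Lemma~\ref{partial integral} cancels exactly against the atom in $\nu_f$ at $T$ coming from the decomposition \eqref{ceducha}. The decomposition is introduced precisely so that this cancellation becomes transparent, and the vanishing of the boundary contribution at $0$ is automatic from $R(0,s)=0$.
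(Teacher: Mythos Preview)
Your approach is correct and essentially the same as the paper's: start from Jolis's bilinear formula \eqref{jolis 00} and integrate by parts in $t$ for fixed $s$. The only difference is packaging: the paper applies Lemma~\ref{partial integral}(a) directly to the pair $(R(s,\cdot), f)$, which already absorbs the decomposition \eqref{ceducha} and the boundary cancellation you spell out by hand; your first ``obstacle'' is also a non-issue, since Jolis's identity in \cite{Jolis 2007} is proved for an arbitrary continuous covariance $R$, not just for fBm.
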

\begin{remark}
For example, we consider functions of the form $g=h\cdot \1_{[a,b]}$, where $0\le a<b\le T$ and $h$ is a differentiable function. In this case, the Lebesgue-Stieljes signed measure $\nu_{g}$ on  $([0,T ], \mathcal{B}([0,T ]))$ can be expressed as:
\begin{equation}\label{jieshi01}
  \nu_g(\dif x)=  h'(x)\cdot \1_{[a,b]}(x)\dif x+  h(x)\cdot \big(\delta_a(x)-\delta_b(x)\big)\dif x,
\end{equation}
where $\delta_a(\cdot)$ represents the Dirac delta function centered at a point $a$.  
\end{remark}
\begin{proof}
It follows from \cite{Jolis 2007} that the following inner product formula of $\FH$ holds: 
 \begin{align}\label{jolis 00}
 \langle f,\,g \rangle_{\FH}=\int_{[0,T]^2}  R(s,t) \nu_{f}( \dif t)  \nu_{g}(\dif s),\qquad \forall f,\, g\in \mathcal{V}_{[0,T]}.
 \end{align}
Under Hypothesis~\ref{hypthe1} ($H_1$), we apply Lemma~\ref{partial integral}, the integration by parts formula, to the absolutely continuous function $R(s,\,\cdot)$ on $t\in [0,T]$ and $f(\cdot)\in \V_{[0,T]}$ to obtain:
 \begin{align}
  \langle f,\,g \rangle_{\FH}&=\int_{[0,T]}  \nu_{g}(\dif s)\int_{[0,T]} R(s,t) \nu_{f}( \dif t)  =-\int_{[0,T]}  \nu_{g}(\dif s)\int_{[0,T]} \frac{\partial R(s,t)}{\partial t} {f}(t) \dif t.\label{fengkong 000}
 \end{align}
\end{proof}

 {\bf Proof of Theorem~\ref{prop 2-1}}:  %Suppose for the sake of definiteness that $g$ has no common discontinuous points as the difference function.
First, since $R^B(s,t)$, the covariance function of fBm $(B^H)$ with $H\in (0,1)$, also satisfies Hypothesis~\ref{hypthe1}  ($H_1$), it follows from Proposition~\ref{prop 2-1010} that
\begin{align*}
  \langle f,\,g \rangle_{\FH_1}&=-\int_{[0,T]}  \nu_{g}(\dif s)\int_{[0,T]} \frac{\partial R^B(s,t)}{\partial t} {f}(t) \dif t.
 \end{align*} 
Subtracting the above equation displayed from the identity \eqref{fengkong 000}, and using Fubini's theorem, we imply that 
 \begin{align*}
 \langle f,\,g \rangle_{\FH}-  \langle f,\,g \rangle_{\FH_1}&=\int_{[0,T]}  \nu_{g}(\dif s)\int_{[0,T]} \Big(\frac{\partial R^B(s,t)}{\partial t} -  \frac{\partial R(s,t)}{\partial t}\Big){f}(t) \dif t\notag\\
 &=\int_{[0,T]}  {f}(t) \dif t\int_{[0,T]} \Big(\frac{\partial R^B(s,t)}{\partial t} - \frac{\partial R(s,t)}{\partial t}\Big) \nu_{g}(\dif s).
 \end{align*}
 Under Hypothesis~\ref{hypthe1} ($H_2$), we apply Lemma~\ref{partial integral} to the normalized bounded variation function $\frac {\partial R(\cdot,t)}{\partial t} - \frac {\partial R^{B}(\cdot,t)}{\partial t}$ on $s\in  [0,T]$ and the function $g(\cdot)\in \V_{[0,T]}$ to derive the desired equation \eqref{diyierdengshi-yiban}:
 \begin{align}\label{guanjiandengshi 000}
 \langle f,\,g \rangle_{\FH}-  \langle f,\,g \rangle_{\FH_1}&=\int_{[0,T]}  {f}(t) \dif t\int_{[0,T]} g(s) \mu(t,\dif s).
 \end{align}  
 {\hfill\large{$\Box$}} 

\begin{proposition}\label{prop 2-101}
Let  $f,\, g\in \FH$. 
If the covariance function $R(t,s)$ also satisfies Hypothesis ($H_2'$) and if the intersection of the supports of $f,\,g$ is of Lebesgue measure zero, then 
 \begin{align} \label{innp fg3-zhicheng0}
\langle f,\,g \rangle_{\FH}=\int_{[0,T]^2}  f(t)g(s) \frac{\partial^2 R(t,s)}{\partial t\partial s} \dif t   \dif s. %\quad , \red{f,g\in \FH??}.
\end{align} 
In particular, when $0 \leq a < b \leq c < d \leq T$, we have
  \begin{align} \label{innp fg3-zhicheng0 tebie}
  \mathbb{E}[(G_b^H-G_a^H)(G_d^H-G_c^H)] = \int^b_a\dif t\int^d_c \frac{\partial^2 R(t,s)}{\partial t\partial s} \dif s.
\end{align} 
\end{proposition}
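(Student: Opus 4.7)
The plan is to combine Corollary~\ref{cor2-2} with a companion inner-product formula for the fBm applied to pairs of functions with disjoint supports; the special case \eqref{innp fg3-zhicheng0 tebie} then emerges as the rectangular-indicator instance.

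First I would invoke Corollary~\ref{cor2-2}. Under ($H_2'$), the measure $\mu(t,\dif s)$ appearing in Theorem~\ref{prop 2-1} is absolutely continuous in $s$ with density $\frac{\partial^2 R(s,t)}{\partial s\partial t} - \frac{\partial^2 R^B(s,t)}{\partial s\partial t}$, so identity \eqref{diyierdengshi} reads
\begin{equation*}
\langle f,\,g\rangle_{\FH} - \langle f,\,g\rangle_{\FH_1} = \int_{[0,T]^2} f(t)\, g(s) \Big[\frac{\partial^2 R(t,s)}{\partial t\partial s} - \frac{\partial^2 R^B(t,s)}{\partial t\partial s}\Big]\dif t\,\dif s.
\end{equation*}
It therefore suffices to establish the companion identity for the fBm, namely
\begin{equation*}
\langle f,\,g\rangle_{\FH_1} = \int_{[0,T]^2} f(t)\, g(s)\, \frac{\partial^2 R^B(t,s)}{\partial t\partial s}\dif t\,\dif s,
\end{equation*}
whenever $\mathrm{supp}(f)\cap\mathrm{supp}(g)$ has zero Lebesgue measure.

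Second, I would verify this fBm identity first on indicators $f=\mathbbm{1}_{[a,b]}$, $g=\mathbbm{1}_{[c,d]}$ with $b\le c$. The left side equals $\E[(B^H_b-B^H_a)(B^H_d-B^H_c)] = R^B(b,d)-R^B(a,d)-R^B(b,c)+R^B(a,c)$, which by the absolute continuity of $R^B(\cdot,s)$ granted by Hypothesis ($H_1$) rewrites as $\int_a^b\big[\frac{\partial R^B(t,d)}{\partial t}-\frac{\partial R^B(t,c)}{\partial t}\big]\dif t$. Since $t\in[a,b]$ and $s\in[c,d]$ are separated by $t\le b\le c\le s$, the function $s\mapsto\frac{\partial R^B(t,s)}{\partial t}$ is smooth on $[c,d]$, so a second application of the fundamental theorem of calculus yields $\int_c^d\frac{\partial^2 R^B(t,s)}{\partial s\partial t}\dif s$, giving the identity on indicators. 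The extension to arbitrary $f,g$ with disjoint supports follows by bilinearity on step functions and a density argument in $\FH_1$, using that $\frac{\partial^2 R^B}{\partial t\partial s}=H(2H-1)|t-s|^{2H-2}$ is bounded on any region of $[0,T]^2$ at positive distance from the diagonal.

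Combining the two displays above yields the main identity \eqref{innp fg3-zhicheng0}, and specializing to $f=\mathbbm{1}_{[a,b]}$, $g=\mathbbm{1}_{[c,d]}$ recovers \eqref{innp fg3-zhicheng0 tebie}. The main obstacle is the last extension step: for $H<1/2$ the kernel $|t-s|^{2H-2}$ is not locally integrable across the diagonal, so one must carefully exploit the disjoint-support hypothesis to guarantee both the convergence of the approximating step-function integrals and the continuity of the bilinear form on the sub-cone of $\FH_1$ formed by pairs with Lebesgue-null support intersection.
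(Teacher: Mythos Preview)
Your proposal is correct and follows essentially the same route as the paper: combine identity \eqref{diyierdengshi} with the companion inner-product formula for $\FH_1$ on disjoint supports, then pass by density. The only difference is one of detail: the paper simply quotes the fBm identity $\langle f,g\rangle_{\FH_1}=H(2H-1)\int_{[0,T]^2}f(t)g(s)|t-s|^{2H-2}\dif t\dif s$ for disjoint supports as a well-known fact (via \cite{Jolis 2007}) and invokes the density of $\mathcal{V}_{[0,T]}$ in both $\FH$ and $\FH_1$, whereas you sketch its verification on indicators and flag the $H<\tfrac12$ integrability issue---an obstacle the paper does not discuss.
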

\begin{proof}
The claim \eqref{innp fg3-zhicheng0} is a direct corollary of the identity \eqref{diyierdengshi} and the two well-known facts that  $\mathcal{V}_{[0,T]}$ is dense in both $\FH$ and $\FH_1$ (see \cite{Jolis 2007}) and that when the intersection of the supports of $f,\,g$ is of Lebesgue measure zero, we have 
\begin{align}
\langle f,\,g \rangle_{\FH_1}&= H(2H-1)\int_{[0,T]^2}\,f(t)g(s)|t-s|^{2H-2}\dif t \dif s,\qquad \forall f,g\in \FH_1.
\end{align}The claim \eqref{innp fg3-zhicheng0 tebie} is trivial.
\end{proof}

\section{Proofs of Theorems~\ref{Gladyshev theorem}, \ref{coro key guanj}, \ref{B-E bound thm}}\label{sec3 applications}
Recall that for these three theorems, we always assume that either the fractional Gaussian process $(G)$ satisfies Hypothesis~ ($H_2'$)-($H_3$) or is given as in Example~\ref{exmp lizi001}. 
\subsection{Gladyshev theorem}\label{Gladyshev th}
The following proposition is used to verify the assumptions (c) of Gladyshev's Theorem, see \cite{Nor 11}. 
\begin{proposition}\label{coro 2-2}
There exists a positive constant $C$ independent {of} $T$ such that $d_G$, the canonical metric for $(G)$, satisfies  % $d^2_G$ is the structure functions for $G$
\begin{align}\label{gima2fang jie}
d_G(s,t):=\left\{\E\left[(G_s-G_t)^2\right]\right\}^{\frac12}\le C\abs{s-t}^{H}.
\end{align} 
Denote $$b(s,t):=\E\left[(G_s-G_t)^2\right]- \E\left[(B^H_s-B^H_t)^2\right].$$ Then for all $\epsilon>0$,
\begin{align}\label{h jixian}
\lim_{h\downarrow 0}\frac{\sup\set{\abs{b(t-h, t)}:\, t\in (\epsilon, T]}}{h^{2H}}=0.
\end{align}
\end{proposition}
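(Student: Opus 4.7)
The plan is to introduce $\phi(s,t):=R(s,t)-R^{B}(s,t)$, so that, using $R^{B}(s,s)-2R^{B}(s,t)+R^{B}(t,t)=|t-s|^{2H}$, we may write
\begin{equation*}
d_G(s,t)^{2}=|t-s|^{2H}+b(s,t),\qquad b(s,t)=\phi(s,s)+\phi(t,t)-2\phi(s,t).
\end{equation*}
By the symmetry $\phi(s,t)=\phi(t,s)$, for $0\le s<t\le T$ the quantity $b(s,t)$ equals the rectangle increment $\phi(t,t)-\phi(s,t)-\phi(t,s)+\phi(s,s)$ of $\phi$ over $[s,t]^{2}$. I will turn this rectangle increment into a double integral of a mixed partial derivative and then exploit either Hypothesis~($H_{3}$) or the explicit form of $R$ in Example~\ref{exmp lizi001} to bound it.

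Under Hypothesis~($H_{2}'$)--($H_{3}$), the absolute continuity supplied by ($H_{1}$) and ($H_{2}'$), together with the integrable majorant in ($H_{3}$), legitimate (via Fubini and two applications of the fundamental theorem of calculus) the representation
\begin{equation*}
b(s,t)=\int_{s}^{t}\!\!\int_{s}^{t}\frac{\partial}{\partial u}\Big(\frac{\partial R(u,v)}{\partial v}-\frac{\partial R^{B}(u,v)}{\partial v}\Big)\,du\,dv.
\end{equation*}
Applying ($H_{3}$) and then the subadditivity $t^{H}-s^{H}\le(t-s)^{H}$ for $H\in(0,1)$ yields
\begin{equation*}
|b(s,t)|\le C\!\int_{s}^{t}\!\!\int_{s}^{t}(uv)^{H-1}\,du\,dv=\frac{C}{H^{2}}(t^{H}-s^{H})^{2}\le\frac{C}{H^{2}}(t-s)^{2H},
\end{equation*}
which gives \eqref{gima2fang jie}. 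For \eqref{h jixian}, fix $\epsilon>0$ and $0<h<\epsilon/2$; on $[t-h,t]\subset[\epsilon/2,T]$ the factor $u^{H-1}$ is bounded by $(\epsilon/2)^{H-1}$, so $|b(t-h,t)|\le C_{\epsilon}h^{2}$ uniformly in $t\in(\epsilon,T]$, whence $h^{-2H}|b(t-h,t)|=O(h^{2-2H})\to 0$ since $H<1$.

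For the process of Example~\ref{exmp lizi001}, the covariance is explicit enough to compute $b$ directly. For $s\le t$ one finds $\phi(s,t)=-\frac12 s^{2H}$ and hence
\begin{equation*}
b(s,t)=\min(s,t)^{2H}-\frac12\bigl(s^{2H}+t^{2H}\bigr)\le 0,
\end{equation*}
so $d_G(s,t)^{2}\le|t-s|^{2H}$, proving \eqref{gima2fang jie}. For \eqref{h jixian}, since $H\in(0,\frac12)$ the map $\xi\mapsto\xi^{2H-1}$ is decreasing, so the mean value theorem gives
\begin{equation*}
|b(t-h,t)|=\frac12\bigl(t^{2H}-(t-h)^{2H}\bigr)\le H(t-h)^{2H-1}h\le H(\epsilon/2)^{2H-1}h
\end{equation*}
for $t>\epsilon$ and $h<\epsilon/2$; dividing by $h^{2H}$ produces an $O(h^{1-2H})\to 0$ bound, which is precisely where the restriction $H<\frac12$ is essential.

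The main obstacle is establishing the double-integral representation of $b(s,t)$ rigorously under only ($H_{2}'$), which provides absolute continuity of $\partial_{t}\phi(\cdot,t)$ rather than classical $C^{2}$ smoothness. The integrable majorant in ($H_{3}$) is crucial here: it allows Fubini and the fundamental theorem of calculus to be applied sequentially without worrying about finer regularity. Once the representation is in hand, the remaining estimates reduce to the elementary concavity and monotonicity arguments sketched above.
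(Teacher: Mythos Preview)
Your proof is correct and follows essentially the same route as the paper: both bound $|b(s,t)|$ by $C(t^{H}-s^{H})^{2}$ via the double-integral representation of the rectangle increment of $R-R^{B}$ together with ($H_{3}$), and treat Example~\ref{exmp lizi001} by direct computation of the covariance. The only cosmetic differences are that the paper packages the double-integral bound as an application of Corollary~\ref{cor2-2} (taking $f=g=\mathbbm{1}_{[s,t]}$ in \eqref{inequality 29}) and handles the limit \eqref{h jixian} via monotonicity of $t\mapsto t^{H}-(t-h)^{H}$ and L'H\^opital, whereas your direct pointwise bound $u^{H-1}\le(\epsilon/2)^{H-1}$ on $[t-h,t]$ (and the mean value theorem in the second case) is slightly more elementary.
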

\begin{proof}
Case (i): When $(G)$ satisfies Hypothesis~ ($H_2'$)-($H_3$). 
Suppose that $s<t$ and set $f(\cdot)=\mathbbm{1}_{[s,t]}(\cdot)$,
 then it follows from It\^o's isometry and the inequality \eqref{inequality 29} that
 \begin{align}
\abs{\E\left[(G_s-G_t)^2\right] -\E\left[(B^H_s-B^H_t)^2\right]}&=\abs{\innp{f,\,f}_{\FH}- \innp{f,\,f}_{\FH_1}}\notag\\
&\le C \left(\int_{[s,t]}  u^{H-1}\dif u  \right)^2 =C t^{2H}\left(1- \left(\frac{s}{t}\right)^H  \right)^2 \label{zhongjianbuds}\\
&\le C\abs{s-t}^{2H},\notag
 \end{align}where in the last inequality we have used the fact $1-x^H \leq (1-x)^H$ for any $x \in [0, 1]$ and $H\in (0,1)$.  
 It is clear that for the fBm $(B^H)$, we have 
 \begin{align*}
 \E\left[(B^H_s-B^H_t)^2\right]=\abs{s-t}^{2H}.
 \end{align*}
 Then the claim \eqref{gima2fang jie} follows from the triangle inequality for absolute values.
 
Next, it follows from the inequality \eqref{zhongjianbuds} that there exists a positive constant $C$ independent of $T$ such that  for all $h,\,\epsilon>0$,
 \begin{align*}
 \sup\set{\abs{b(t-h, t)}:\, t\in (\epsilon, T]}&\le C \times\left(\sup\set{t^H- (t-h)^H:\, t\in (\epsilon, T]}\right)^2\\
 &=C \times \left[{\epsilon}^H- (\epsilon-h)^H\right]^2,
 \end{align*}
 where in the last line we use the monotonicity of the function $\phi(t)=t^H- (t-h)^H$ on $t\in[\epsilon, T]$. By L'H\^opital's rule, we have 
\begin{align*}
 0\le \lim_{h\downarrow 0}\frac{\sup\set{\abs{b(t-h, t)}:\, t\in (\epsilon, T]}}{h^{2H}}\le C \times \left(\lim_{h\downarrow 0} \frac{{\epsilon}^H- (\epsilon-h)^H}{h^H}\right)^2=0
\end{align*}
and the claim follows.
 
Case (ii): When $(G)$ is given as in Example~\ref{exmp lizi001}. By the identity \eqref{guanjiandengshi 000-new1} or the covariance function \eqref{cova func 01}, we have that \begin{align}
\abs{\E\left[(G_s-G_t)^2\right] -\E\left[(B^H_s-B^H_t)^2\right]}&=H \int_{[s,t]}  u^{2H-1}\dif u   =\frac12 t^{2H}\left(1- \left(\frac{s}{t}\right)^{2H}\right)  \label{zhongjianbuds-new}\\
&\le \frac12\abs{s-t}^{2H},\notag
 \end{align} since $H\in (0,\frac12)$. Hence, the inequality \eqref{gima2fang jie}  holds. By the identity \eqref{zhongjianbuds-new}, we can show the limit \eqref{h jixian} in the same vein as Case (i).
 \end{proof}
 
{\bf Proof of Theorem~\ref{Gladyshev theorem}:} Case (i): When $(G)$ satisfies Hypothesis~ ($H_2'$)-($H_3$). 
The idea is to verify all the assumptions (a)-(c) of Gladyshev's Theorem, \cite{Nor 11}. Form its proof \cite[p.30]{Nor 11}, we find out firstly that the assumption of $R(s,t)$ having a continuous mixed second-order
partial derivative on $U:=\set{(s,t)\in(0,T]\times (0,T];\,s\neq t}$ can be relaxed to the inner product representation formula \eqref{innp fg3-zhicheng0 tebie}, and that the upper bound of  $\Bigg| \frac{\partial^2 R(s,t)}{\partial s  \partial t}  \Bigg| $ can be weaken to be valid a.e., see \eqref{piandaoshu shangjie} below for details.
  
Next, the assumption (a) of Gladyshev's Theorem, \cite{Nor 11}, is valid since $(G)$ has mean zero. Third, the covariance function $R(s,t)$ is continuous on $[0,T]\times [0,T]$. In fact, the identity \eqref{diyierdengshi} has a special case:
\begin{align} 
R(s, t)-R^B(s,t)&=\int_0^t \dif u \int_0^s\frac{\partial}{\partial v}\Big(\frac {\partial R(v,u)}{\partial u} - \frac {\partial R^{B}(v,u)}{\partial u}\Big)  \dif v,  \end{align} 
That is to say,
 \begin{align*}
 R(s,t)=R^B(s,t)+\int_0^t \dif u \int_0^s\frac{\partial}{\partial v}\left(\frac {\partial R(v,u)}{\partial u} - \frac {\partial R^{B}(v,u)}{\partial u}\right)  \dif v,
 \end{align*}
 which implies that $R(s,t)$ is continuous on $[0,T]\times [0,T]$ since $R^B(s,t)$ is so.
The inequality \eqref{bijiao 00}, together with the triangle inequality for absolute values,  implies that the inequality 
\begin{align}\label{piandaoshu shangjie}
\left| \frac{\partial^2 R(s,t)}{\partial s  \partial t}  \right|  &\le \left| \frac{\partial^2 R^B(s,t)}{\partial s  \partial t}  \right| +   C  \times (ts )^{H-1}
=H\abs{2H-1}\abs{s-t}^{2H-2}+  C   (ts )^{H-1}\quad a.e.
\end{align}
holds on $U:=\set{(s,t)\in(0,T]\times (0,T];\,s\neq t}$. 
Moreover, denote \begin{align*} f_1(s,t)={H } {\abs{s-t}^{2H-2} } ,\qquad f_2(s,t)={C } {(  st)^{H-1}}.
\end{align*}Then both the functions $f_i(s,t),\,i=1,2$, satisfy the following inequality (see the proof of Corollary 1 of \cite{Nor 11}):
\begin{align*}
%\sum_{k=2}^{2^n-2}\sum_{j=k+2}^{2^n} \frac{1}{(\inf\set{f_i(s,t):\, (s,t)\in ((j-1)T,jT] \times( (k-1)T,kT] })^2}\le CT^{4(H-1)}2^{n(4H-2)}.
\sum_{k=2}^{2^n-2}\sum_{j=k+2}^{2^n} {(\sup\set{f_i(s,t):\, (s,t)\in ((j-1)T,jT] \times( (k-1)T,kT] })^2}
\le CT^{4(H-1)}2^{n(4H-2)}.
\end{align*}
Hence, all the assumptions (b) of Gladyshev's Theorem \cite{Nor 11} are verified.  Finally, it follows from Proposition~\ref{coro 2-2} that the assumptions (c) of Gladyshev's Theorem \cite{Nor 11} are satisfied. Hence the claim follows.  
 
Case (ii): When $(G)$ is given as in Example~\ref{exmp lizi001}. It is evident that the mixed second-order partial derivative of $(G)$ on $U:=\set{(s,t)\in (0,T]\times(0,T],\,s\neq t}$ is equal to that of the fBm $(B^H)$. Therefore, the assumptions (b) of Theorem 1 of \cite{Nor 11} are satisfied. Proposition~\ref{coro 2-2} implies that the assumptions (c) of Theorem 1 of \cite{Nor 11} are also met. Consequently, Gladyshev's Theorem holds for  $(G)$ in Example~\ref{exmp lizi001}.%a few modifications of the proof of Theorem 1 of \cite{Nor 11}, please also refer to proof of Case (i).  %First, comparing the identity \eqref{guanjiandengshi 000-new1}  with the inequality \eqref{zhongjianbuds}, we find that Proposition~\ref{coro 2-2} is still valid for the process $(G)$ in Example~\ref{exmp lizi001}. 
 
%If fact, as in \cite{Nor 11}, denote that 
% \begin{align*} \Delta_{k,n}G:=G\big(\frac{kT}{2^n}\big)-G\big(\frac{(k-1)T}{2^n}\big),\qquad 1\le k\le 2^n.  \end{align*}
%Then it follows from the proof of Theorem 1 of \cite{Nor 11} that what we need is to bound the terms:
%  \begin{align*}\phi_{j,k,n}:=\E\Big[ \Delta_{k,n}G  \Delta_{j,n}G \Big] \end{align*}
%for $j\ge k+2\ge 4$. In fact, the identity \eqref{innp fg3-zhicheng0-00} or the identity \eqref{innp fg3-zhicheng0 tebie 0000} implies that $\phi_{j,k,n}$ is exact equal to the value of the fBm $(B^H) $, i.e.,  \begin{align*}\phi_{j,k,n}:=\E\Big[ \Delta_{k,n}B^H  \Delta_{j,n}B^H \Big].  \end{align*} Since the bound for the fBm $(B^H) $ is well known, we obtain the desired result. 
{\hfill\large{$\Box$}}

\subsection{Berry-Ess\'{e}en bound of the informal LSE and moment estimator for the OU process driven by fractional Gaussian processes}

Denote 
\begin{align}
    \zeta_t = \int_0^t e^{-\theta(t-u)}\dif G_u,\quad    \eta_t = \int_0^t e^{-\theta(t-u)}\dif B^H_u,
 \label{zeta T jifen}
\end{align}
where $\theta>0$ is a constant.  Then $(\zeta_t)_{t\in [0,T]},\,(\eta_t)_{t\in [0,T]}$ are respectively the ergodic OU process defined by the stochastic differential equations \eqref{OU dingyi} and 
\begin{align}
   \dif \eta_t &=-\theta \eta_t \dif t +\dif  B^H_t, \quad \eta_0 = 0.\label{OU dingyi duibi}
\end{align}  

First, from the quantified inequality relation \eqref{inequality 29}, we can compare the covariance function of $(\zeta)$ with that of $(\eta)$. %This will enable us to obtain the Berry-Ess\'{e}en bound of the LSE and moment estimator for the parameter $\theta$. 
\begin{proposition}\label{key prop bijiao}
Denote \begin{align}\label{rho12}
\rho_1(t,s)=\E[ \zeta_t \zeta_s],\quad \rho_2(t,s)=\E[\eta_t\eta_s].\end{align}
 Then there exists a positive constant $C$ independent of $T$ such that $\forall 0\le s<t\le T$,
\begin{align}
%\rho_1(t,s)&=e^{(s-t)\theta}\E[  \zeta_s^2]+ \int_0^s e^{ ({u}-s)\theta}  \dif  {u} \int_s^t e^{ ({v}-t)\theta} \frac{\partial ^2 R(u,v)}{\partial u\partial v}\dif  {v},\\
%\rho_2(t,s)&=e^{(s-t)\theta}\E[ \eta_s^2]+\alpha_H \int_0^s e^{ ({u}-s)\theta }  \dif  {u} \int_s^t e^{ ({v}-t)\theta}  \abs{u-v}^{2H-2}\dif  {v}, \label{rho2ts bds} \\
\abs{\rho_2(t,s)}&\le C\big(1\wedge(t-s)^{2(H-1)}\big); \label{rho2 guji}\end{align}
and when $(G)$ satisfies Hypothesis~ ($H_2'$)-($H_3$), \begin{align}\abs{\rho_1(t,s)-\rho_2(t,s)}&\le C \big(1\wedge s^{2(H-1)} \wedge (t-s)^{H-1}\big); \label{rho21ts cha bds}
\end{align}and when $(G)$ is given as in Example~\ref{exmp lizi001},
\begin{align}
0\le \rho_2(t,s)-\rho_1(t,s) \le C e^{(s-t)\theta} \big(1\wedge s^{2H-1} \big) \label{rho21ts cha bds-new case}.
\end{align} 
\end{proposition}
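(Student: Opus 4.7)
I plan to prove the three bounds in turn, each with its own tool. For \eqref{rho2 guji}, both halves are classical facts about the fBm-driven OU process $(\eta_t)$: the uniform bound $|\rho_2(t,s)|\le C$ is Cauchy--Schwarz combined with the classical uniform-in-$t$ bound $\E[\eta_t^2]\le C$, while the polynomial decay $|\rho_2(t,s)|\le C(t-s)^{2(H-1)}$ for large $t-s$ is the well-known slow-decay behaviour of the fractional OU covariance. Taking the minimum yields \eqref{rho2 guji}.

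For \eqref{rho21ts cha bds}, since the integrands $e^{-\theta(t-\cdot)}\1_{[0,t]}$ and $e^{-\theta(s-\cdot)}\1_{[0,s]}$ are deterministic, the It\^o isometry \eqref{G extension defn} yields
\[
\rho_1(t,s)-\rho_2(t,s) = \langle e^{-\theta(t-\cdot)}\1_{[0,t]},\, e^{-\theta(s-\cdot)}\1_{[0,s]}\rangle_{\FH} - \langle e^{-\theta(t-\cdot)}\1_{[0,t]},\, e^{-\theta(s-\cdot)}\1_{[0,s]}\rangle_{\FH_1}.
\]
The quantitative comparison \eqref{inequality 29} then reduces the task to bounding $I(t)I(s)$, where
\[
I(r) := \int_0^r e^{-\theta(r-u)}\,u^{H-1}\,\dif u.
\]
A single split of $I(r)$ at $u=r/2$ (using $e^{-\theta(r-u)}\le e^{-\theta r/2}$ on $[0,r/2]$ and $u^{H-1}\le (r/2)^{H-1}$ on $[r/2,r]$) gives the two-sided estimate $I(r)\le C(1\wedge r^{H-1})$ for $r>0$. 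The three target rates then follow by case-chasing using the monotonicity of $r\mapsto r^{H-1}$ (since $H<1$) together with the orderings $t\ge s$ and $t\ge t-s$: the constant bound is immediate; the bound $C s^{2(H-1)}$ comes from $I(t)\le C t^{H-1}\le C s^{H-1}$ and $I(s)\le C s^{H-1}$ when $s\ge 1$, the complementary case $s<1$ being trivial since $s^{2(H-1)}\ge 1$; the bound $C(t-s)^{H-1}$ comes from $I(t)\le C t^{H-1}\le C(t-s)^{H-1}$ and $I(s)\le C$ when $t-s\ge 1$, the case $t-s<1$ again being trivial.

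For \eqref{rho21ts cha bds-new case}, I would invoke the explicit identity \eqref{guanjiandengshi 000-new1}. Its compatibility hypothesis is met since, for $g=e^{-\theta(s-\cdot)}\1_{[0,s]}$, the only discontinuity of $g$ inside $[0,T]$ is at $s'=s$, while the step function \eqref{cha-hanshu-juti} jumps only at $s'=t$, and $s<t$ makes the two points distinct. The identity then gives the closed form
\[
\rho_1(t,s)-\rho_2(t,s) = -H\int_0^s e^{-\theta(t+s-2u)}\,u^{2H-1}\,\dif u = -He^{-\theta(t-s)}\int_0^s e^{-2\theta(s-u)}\,u^{2H-1}\,\dif u,
\]
which is non-positive, establishing $\rho_2\ge \rho_1$. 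The same splitting-at-$s/2$ argument bounds the remaining integral by $C(1\wedge s^{2H-1})$ (for $s\ge 1$ via the split, and for $s\le 1$ via the trivial estimate $\int_0^s u^{2H-1}\dif u = s^{2H}/(2H)\le 1/(2H)$), producing the desired bound.

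The main obstacle will be the three-way case analysis for \eqref{rho21ts cha bds}: a single quantitative comparison must yield three separately optimal decay rates. The key observation making it work is that each of $1$, $s^{2(H-1)}$, $(t-s)^{H-1}$ is either captured by the polynomial decay of $I$ in the ``large'' regime of the relevant variable (with $t\ge s$ or $t\ge t-s$ used to reroute the decay through the correct variable) or else exceeds $1$ and is therefore implied automatically by the uniform bound $I(r)\le C$.
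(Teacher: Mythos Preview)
Your proposal is correct, and for the two difference estimates \eqref{rho21ts cha bds} and \eqref{rho21ts cha bds-new case} it is genuinely more direct than the paper's argument. The paper first derives a semigroup decomposition
\[
\rho_1(t,s)=e^{(s-t)\theta}\E[\zeta_s^2]+\int_0^s e^{(u-s)\theta}\,\dif u\int_s^t e^{(v-t)\theta}\,\frac{\partial^2 R(u,v)}{\partial u\,\partial v}\,\dif v
\]
(using the disjoint-support identity of Proposition~\ref{prop 2-101}), writes the analogous formula for $\rho_2$, and then compares the two pieces separately: the diagonal piece via \eqref{inequality 29} applied with $f=g$, and the cross piece via the pointwise hypothesis $(H_3)$ on the mixed derivative. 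For Example~\ref{exmp lizi001} the paper exploits that the cross pieces coincide exactly (by \eqref{innp fg3-zhicheng0-00}), so only the diagonal difference $e^{(s-t)\theta}\bigl(\E[\eta_s^2]-\E[\zeta_s^2]\bigr)$ survives, which is of course the same closed form you obtain. By contrast, you apply \eqref{inequality 29} and \eqref{guanjiandengshi 000-new1} to the full kernels $e^{-\theta(t-\cdot)}\1_{[0,t]}$ and $e^{-\theta(s-\cdot)}\1_{[0,s]}$ in one shot, and your three-way case analysis via $I(r)\le C(1\wedge r^{H-1})$ together with the monotonicity $t\ge s$ and $t\ge t-s$ is a clean way to land on all three rates simultaneously; the paper's splitting-then-recombining route obtains the same bound but with more moving parts. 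What the paper's decomposition buys is a self-contained proof of \eqref{rho2 guji} (via Lemma~\ref{upper bound F-1}), whereas you outsource that estimate to the literature; conversely, your route avoids invoking Proposition~\ref{prop 2-101} and the pointwise form of $(H_3)$, relying only on the integrated comparison \eqref{inequality 29}.
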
 %We postponed its proof into Section~\ref{appendx}. 
\begin{proof}
First,  for any $0\le s<t\le T$,
we rewrite the OU process \eqref{OU dingyi} as follows:
\begin{align*}
 \zeta_t&= \int_0^t e^{-\theta(t-u)}\dif G_u =e^{\theta(s-t)}\zeta_s+ \int_s^t e^{-\theta(t-u)}\dif G_u.%=I_1(e^{-\theta(t-\cdot)} \mathbf{1}_{[0,t]} (\cdot)).
 \end{align*} 
 Then we have that  for any $0\le s<t\le T$,
 \begin{align*} \zeta_t- \zeta_s &=(e^{\theta(s-t)}-1) \zeta_s+ G(e^{-\theta(t-\cdot)} \mathbf{1}_{[s,t]} (\cdot)).
\end{align*}
It follows from It\^{o}'s isometry and the identity \eqref{innp fg3-zhicheng0} that  for any $0\le s<t\le T$,
\begin{align}
\rho_1(t,s)&=\E[ \zeta_t \zeta_s]=\E[ (\zeta_t- \zeta_s)\zeta_s]+\E[ \zeta_s^2] \notag \\
&=\big[(e^{(s-t)\theta}-1)+1\big]\E[ \zeta_s^2]+\E\big[G(e^{-\theta(t-\cdot)} \mathbf{1}_{[s,t]} (\cdot))G(e^{-\theta(s-\cdot)} \mathbf{1}_{[0,s]} (\cdot))  \big]\notag  \\
&=e^{(s-t)\theta} \E[ \zeta_s^2]+\int_0^s e^{ ({u}-s)\theta}  \dif  {u} \int_s^t e^{ ({v}-t)\theta}\, \frac{\partial ^2 R(u,v)}{\partial u\partial v}\dif  {v}.\label{rho1ts bds}
\end{align} 
Denote $\alpha_H =H(2H-1)$.  A special case of \eqref{rho1ts bds} gives that for any $0\le s<t\le T$,
\begin{align} 
\rho_2(t,s)&=\E[ \eta_t \eta_s]=e^{(s-t)\theta}\E[ \eta_s^2]+\alpha_H \int_0^s e^{ ({u}-s)\theta }  \dif  {u} \int_s^t e^{ ({v}-t)\theta}  \abs{u-v}^{2H-2}\dif  {v}. \label{rho2ts bds} 
\end{align}

Next, Theorem 3.1 of \cite{ES 23}, or see \cite{BBES 23}, implies that \begin{align*} \sup_{t\ge 0} \E[\eta_t^2]<\infty,\end{align*}
which, together with the identity \eqref{rho2ts bds}, implies that for any $0\le s<t\le T$,
\begin{align}\label{qidan buds00}
\abs{\rho_2(s,t)}&\le C \Big[e^{(s-t)\theta}+\int_0^s e^{( {u}-s)\theta}  \dif  {u} \int_s^t e^{( {v}-t)\theta}(  {v}- {u})^{2H-2}\dif  {v}\Big].\end{align} It is clear that for any $0\le s<t$,
\begin{align*}e^{(s-t)\theta}<C \big(1\wedge(t-s)^{2(H-1)}\big)  .
\end{align*}
By change of variables $x=s-{u},\,y={v}-s$, we have that:
\begin{align*}
\int_0^s e^{({u}-s)\theta}  \dif {u} \int_s^t e^{({v}-t)\theta}({v}-{u})^{2H-2}\dif {v}&=\int_0^s e^{-\theta x}  \dif x \int_0^{t-s} e^{(y-(t-s))\theta}(x+y)^{2H-2}\dif y\\
%&\le 2^{ 2H-2}\int_0^s e^{-\theta x} x^{H-1} \dif x \int_0^{t-s} e^{(y-(t-s))\theta}y^{H-1}\dif y\\
&\le C\big(1\wedge(t-s)^{2(H-1)}\big),
\end{align*}where in the last line we use Lemma~\ref{upper bound F-1}. Plugging the above two upper bounds into the inequality \eqref{qidan buds00}, we obtain the estimate \eqref{rho2 guji}.

Finally, we show the two estimates \eqref{rho21ts cha bds} or \eqref{rho21ts cha bds-new case} for the two different assumptions on $(G)$ respectively.
Case (i): When $(G)$ satisfies Hypothesis~ ($H_2'$)-($H_3$), it follows from Corollary~\ref{cor2-2} and Lemma~\ref{upper bound F} that
\begin{align}\label{zeta2 eta2 cha kongzhi}
\abs{\E[  \zeta_s^2]-\E[  \eta_s^2] }\le \left[\mu(e^{-\theta(s-\cdot)} \mathbf{1}_{[0,s]} (\cdot) )\right]^2\le C\times \left(1\wedge s^{2(H-1)} \right),
\end{align}
and 
\begin{align*}
&\abs{\rho_1(t,s)-\rho_2(t,s)}\notag \\
&\le e^{(s-t)\theta}\abs{\E[  \zeta_s^2]-\E[  \eta_s^2] }+ \int_0^s e^{ ({u}-s)\theta}  \dif  {u} \int_s^t e^{ ({v}-t)\theta} \abs{\frac{\partial ^2 R(u,v)}{\partial u\partial v}-\frac{\partial ^2 R^B(u,v)}{\partial u\partial v} }\dif  {v}\notag\\
&\le C\times \left[e^{(s-t)\theta} \left(1\wedge s^{2(H-1)} \right)+\int_0^s e^{({u}-s)\theta} u^{H-1}  \dif  {u} \int_s^t e^{ ({v}-t)\theta}  v^{H-1}\dif  {v} \right] \\
&\le   C\times  \left(1\wedge s^{2(H-1)} \wedge (t-s)^{H-1}\right), \notag
\end{align*} which is the desired \eqref{rho21ts cha bds}.

Case (ii). When $(G)$ is given as in Example~\ref{exmp lizi001}, it follows from the identities \eqref{guanjiandengshi 000-new1}-\eqref{innp fg3-zhicheng0-00} and \eqref{rho1ts bds}-\eqref{rho2ts bds} that $ \forall \, 0\le s\le t\le T$,
 \begin{align}\label{plug zhongjian}
0\le \rho_2(t,s)-\rho_1(t,s)= e^{(s-t)\theta}[\E(\eta_s^2-\zeta_s^2)].% \le C e^{(s-t)\theta} \left(1\wedge s^{2H-1} \right) 
\end{align} The identity \eqref{guanjiandengshi 000-new1} and Lemma ~\ref{upper bound F} imply that 
\begin{align*}
\E(\eta_s^2-\zeta_s^2)&= H\int_{0}^s e^{2(u-s)\theta} u^{2H-1}\dif u\le C \left(1\wedge s^{2H-1} \right) .
\end{align*}
Plugging the inequality displayed above into \eqref{plug zhongjian}, we obtain the estimate \eqref{rho21ts cha bds-new case}.
\end{proof}

Next, recall the chaos process $W_T$, which is represented by the identity  \eqref{wt dyi}. It follows from the product formula of Wiener-It\^o multiple integrals that
\begin{align}
\E[W_T^2]&=\frac{2}{  {T}}\int_{[0,T]^2}\rho_1^2(t,s)\dif t\dif s,\label{wt2 bds}\\
\kappa_3(W_T)&=\E[W_T^3]=\frac{48}{T^{\frac32}} \int_{0\le r<s<t\le T}\rho_1(r,s)\rho_1(s,t)\rho_1(t,r)\dif r\dif s\dif t,\label{wt2 bds2}\\
\kappa_4(W_T)&=\E[W_T^4]-3\big[\E[W_T^2]\big]^2\notag\\
&\le \frac{48\times 24}{T^{2}} \int_{0\le s<t<u<v\le T}\rho_1(s,t)\rho_1(t,u)\rho_1(u,v)\rho_1(v,s)\dif s\dif t\dif u\dif v.\label{wt2 bds3}
\end{align} 
For further details, please refer to   \cite{BBES 23, DEKN 22}.

% \subsubsection{Proofs under Hypothesis ($H_2'$)-($H_3$)}
{\bf Proof of Theorem~\ref{coro key guanj}:}
The proof is a direct application of the optimal fourth moment theorem, \cite{NP 15}. By using  Propositions~\ref{sijiejun juli qidian} and \ref{sijiejun juli} and the fourth moment theorem, \cite{NP 05}, it can be shown that when $H\in (0,\frac12)$, 
\begin{align*}
W_T  \stackrel{ {law}}{\to}  Z,\quad \text{ as\,\, } T\to\infty.
\end{align*}
Then, the optimal fourth moment theorem implies that when $H\in (0,\frac12)$, 
\begin{align*}
d_{TV}(W_T,\,Z)&\le C\times \max\set{\abs{\kappa_3(W_T),\,\kappa_4(W_T)}}\le C\times \frac{1}{\sqrt{T}},
\end{align*}
where the last inequality is from Propositions~\ref{sanjiejun juli} and \ref{sijiejun juli}.
 {\hfill\large{$\Box$}} 

%As a corollary of Theorem~\ref{coro key guanj},  we
{\bf Proof of Theorem~\ref{B-E bound thm}:}
Let $a= H\Gamma(2H)\theta^{-2H}  $ and consider $Z\sim N(0,\sigma^2_B)$. Firstly, for the moment estimator $\tilde{\theta}_T$, we can deduce from Theorem 1.3 and Remark 3.2 of \cite{chengu2021} that when the fractional Gaussian process $(G)$ satisfies Hypothesis~ ($H_2'$)-($H_3$) with $H\in (0,\,\frac12)$, the following inequality holds:
\begin{align*}
&\sup_{z\in \Rnum}\abs{P(\sqrt{\frac{4H^2 T}{\theta \sigma^2_{H}}} (\tilde{\theta}_T-\theta )\le z)-\Phi( z)}\\
&\le  C\times \left[ d_{Kol}(W_T, Z) +\sqrt {T}\abs{\frac{1}{  {T}}\int_{0}^T  \E[\zeta_t^2] \dif t - a}+\frac{1}{\sqrt{T}}\right]\\
&\le C\times \left[d_{Kol}(W_T, Z) + \frac{1}{\sqrt{T}}\right],
\end{align*}
which, together with Theorem~\ref{coro key guanj}, implies that the Berry-Ess\'{e}en upper bound \eqref{b-e bound 44}  holds. 

Secondly, for the informal LSE $\hat{\theta}_T$, we can deduce from the proof of Theorem 3.2 of \cite{BBES 23} that the following inequality holds:
\begin{align*}
& \sup_{z\in \Rnum}\abs{P(\sqrt{\frac{T}{\theta \sigma^2_{H}}} (\hat{\theta}_T-\theta )\le z)-\Phi( z)}\\
&\le C\times \left[ d_{Kol} (W_T,\, Z ) +\abs{\E[W_T^2]-\sigma^2_B}+\frac{1 }{\sqrt{T} }\right],
\end{align*}
which, together with Theorem~\ref{coro key guanj} and Proposition~\ref{sijiejun juli qidian}, implies that the Berry-Ess\'{e}en upper bound \eqref{b-e bound 34}  holds. 
{\hfill\large{$\Box$}} 

%\subsection{Proof when ($H_2'$) fails but ($H_2$) is valid.} 
% {\bf Proof of Theorem~\ref{new-thm-000}:}  (i).  (ii).   {\hfill\large{$\Box$}} 

\section{Examples}\label{exmp section}
%Besides the standard fBm and Example~\ref{exmp lizi001} given in Section~\ref{sec-int}, here we present other seven types of fractional Gaussian processes. 
Recall that Example~\ref{exmp lizi001} dose not satisfy Hypothesis~($H_2'$)-($H_3$). In Subsection~\ref{exmp section-1}, we point out that most of other examples of fractional Gaussian processes in the literature belong to a sub-class of Hypothesis~($H_2'$)-($H_3$).  In Subsection~\ref{exmp section-2}, we will construct an artificial fractional Gaussian processes that satisfies Hypothesis~($H_2'$)-($H_3$) but does not belong to the sub-class.
\subsection{Gaussian processes {belonging} to a sub-class of Hypothesis~($H_2'$)-($H_3$)}\label{exmp section-1}
In \cite{chenlu2023}, a sub-class of Hypothesis~($H_2'$)-($H_3$) is introduced. Here, we state it as a single Hypothesis as follows:
\begin{hyp}\label{hypthe1new}
The covariance function $R(s,t)=E (G_tG_s)$ of the Gaussian process $(G_t)_{t\in[0,T ]}$ with zero mean satisfies Hypothesis~($H_1$)-($H_2'$) and 
\begin{enumerate}
 \item[($H_3'$)]There exist constants $H'>0,\, K\in (0,2)$ independent of $T$ such that $H:=H'K\in (0, 1)$, and there exist constants $C_1,C_2\ge 0$ which depend only on $H',\,K$ such that the inequality
  \begin{equation}\label{phi2}
 \left| \frac{\partial}{\partial s}\left(\frac{\partial R(s,t)}{\partial t} - \frac{\partial R^{B}(s,t)}{\partial t}\right)\right| \le  C_1  (t+s)^{2H-2}+C_2 (s^{2H'}+t^{2H'})^{K-2}(st)^{2H'-1} 
\end{equation}holds.
\end{enumerate}
\end{hyp}
\begin{remark}\label{zhushi 001}
\begin{enumerate}
 \item [(i)] Hypothesis ($H_3'$)  is slightly stronger than Hypothesis ($H_3$). In fact, the inequality $ a^2+b^2\ge 2ab$ and the monotonicity of the function $f(x)=x^{p},\,x>0$ with $p<0$ imply that the inequality
\begin{align*} 
C_1  (t+s)^{2H-2}+C_2 (s^{2H'}+ t^{2H'})^{K-2}(st)^{2H'-1}& \le  \left(2^{2H-2}C_1 +2^{K-2}C_2 \right) (st )^{H-1},
\end{align*}
holds when $K\in (0,2),\,H:=H'K\in (0,1)$. Hence, Hypothesis ($H_3'$)  implies that Hypothesis ($H_3$)  holds, i.e.,
% 故有
\begin{align}\label{bijiao 00 old}
\left|\frac{\partial}{\partial s}\left(\frac {\partial R(s,t)}{\partial t} - \frac {\partial R^{B}(s,t)}{\partial t}\right)\right|  & \le  C (ts )^{H-1}.
\end{align} 
Thus, Hypothesis~\ref{hypthe1new} is a sub-class of Hypothesis~($H_2'$)-($H_3$).
\item[(ii)] One special case of Hypothesis~($H_3'$) is as follows:
   there exist two constants ~$H\in (0, 1)$ and $C_H\ge 0$ {independent} of $T$ such
   that the inequality
\begin{equation}\label{phi2-dj}
 \left| \frac{\partial}{\partial s}\left(\frac {\partial R(s,t)}{\partial t} - \frac {\partial R^{B}(s,t)}{\partial t}\right)\right| \le C_H(t+s)^{2H-2} 
  \end{equation}
  holds. %In fact, we take $K=1,\,C_2=0$ in the inequality \eqref{phi2}.
   Another special case of Hypothesis ~($H_3'$)  is as follows:
   there exist three constants $H'>0,\, K\in (0,2)$ and $C_{H',K}\ge 0$ that do not depend on $T$, such that $H:=H'K\in (0, 1)$ and the inequality
  \begin{equation}\label{phi2-dj-1}
 \left| \frac{\partial}{\partial s}\left(\frac {\partial R(s,t)}{\partial t} - \frac {\partial R^{B}(s,t)}{\partial t}\right)\right| \le  C_{H',K}   (s^{2H'}+t^{2H'})^{K-2}(st)^{2H'-1} 
  \end{equation}
  holds. %In fact, we take $C_1=0$ in the inequality \eqref{phi2}.
 \end{enumerate}
\end{remark}	

For the next six types of Gaussian processes,  the inequalities \eqref{phi2-dj}, \eqref{phi2-dj-1} and \eqref{phi2} degenerate respectively and hence Hypothesis~\ref{hypthe1new} holds. 
\begin{ex}\label{exmp6}
The Gaussian process $\vartheta_t$ with the covariance function
$$ R(t,\, s)=\frac{1}{2}\left((s+t)^{2H}-|t-s|^{2H}\right) $$ 
satisfies Hypothesis~\ref{hypthe1new} when $H \in (0,1) $. 
This Gaussian process is the derivative of the negative sub-fractional Brownian motion with parameter $h=2+2H$. See \citealt{BGT 2007}. In fact, it is clear that 
\begin{align*}
   \frac{\partial}{\partial s}\left(\frac{\partial R(s,t)}{\partial t} - \frac{\partial R^{B}(s,t)}{\partial t}\right)=H(2H-1)(t+s)^{2H-2},   
\end{align*}
which makes the inequality \eqref{phi2-dj} degenerate.% and certainly satisfies Hypothesis~\ref{hypthe1}. 
\end{ex}
\begin{example}\label{exmp0001-5}
The sub-fractional Brownian motion $\{S^H(t), t \geq 0\}$ with parameter $H\in (0,1)$ and the covariance function
$$R(t,s)=s^{2H}+t^{2H}-\frac{1}{2}\left((s+t)^{2H}+|t-s|^{2H}\right)$$
satisfies Hypothesis~\ref{hypthe1new}.
 \end{example}
\begin{example} The bi-fractional Brownian motion $\{B^{H',K}(t), t\geq 0\}$ with the covariance function
$$R(t,s)=\frac{1}{2}\left((s^{2H'}+t^{2H'})^K - |t-s|^{2H'K}\right) $$ satisfies Hypothesis~\ref{hypthe1new}, where parameters $H'>0,K \in (0, 2)$ such that $H'K\in (0, 1)$. %This range of parameters extends that in \citealt{ES 21} where they assume that $H'\in (0,1),\,K\in (0,1]$. 
 \end{example}
\begin{example}\label{exmp5}
The generalized sub-fractional Brownian motion $S^{H',K}(t) $ with parameters $H' \in (0, 1),\,K \in(0,2)$ and $H'K\in (0,1)$ satisfies Hypothesis~\ref{hypthe1new}. The covariance function
is given by
$$ R(t,\, s)= (s^{2H'}+t^{2H'})^{K}-\frac12 \left[(t+s)^{2H'K} + \abs{t-s}^{2H'K} \right].$$
%(see \cite{SghA 13}). %Especially, when $K=1$,  one has the sub-fractional Brownian motion $S^H(t) $ with parameter $H\in (0,1)$.
\end{example}
\begin{ex}\label{exmp7}  
The self-similar Gaussian process $\mathsf{S}^{H,K}(t) $ with parameters $H\in (0,\frac12),\,K\in (0, 1)$ and the covariance function
$$ R(t,\, s)=\frac{1}{2(1-K)}[t^{2H}+s^{2H}-K(t+s)^{2H}]-\frac12 |t-s|^{2H}$$  satisfies Hypothesis~\ref{hypthe1new}, since it also makes the inequality \eqref{phi2-dj} degenerate. 
See \citealt{Sgh 2014}. \end{ex}

\begin{ex}\label{exmp7-1zili}
The generalized fractional Brownian motion is an extension of both fractional and sub-fractional Brownian motions, with a covariance function given by
$$ R(t,\, s)=\frac{(a+b)^2}{2(a^2+b^2)}(s^{2H}+t^{2H})-\frac{ab}{a^2+b^2}(s+t)^{2H}-\frac12 \abs{t-s}^{2H},$$
where $H\in (0, 1)$ and $(a,b)\neq (0,0)$. It satisfies Hypothesis~\ref{hypthe1new} since it also makes the inequality \eqref{phi2-dj} degenerate.  See \citealt{Zili 17}. In fact, both Example~\ref{exmp6} and Example~\ref{exmp7} are special cases of this one.
\end{ex}

\subsection{An artificial counterexample that dose not satisfy Hypothesis~\ref{hypthe1new}, but still satisfies Hypothesis~($H_2'$)-($H_3$).}\label{exmp section-2}
The following example is a mixed Gaussian process, which is a linear combination of independent centred Gaussian processes.
\begin{ex}\label{exmp8-counter}
%This is an artificial counter-example. 
Suppose that $Z\sim N(0,1)$ is independent on the fBm $(B^H_t)_{t\in[0,T ]}$. Construct a Gaussian process as follows:
\begin{align*}
G_t:=B^H_t+t^H Z,\qquad t\in [0,T ],
\end{align*} 
where the covariance function satisfies
\begin{align*}
R(t,s)-R^{B}(t,s) =(ts)^H.
\end{align*} 
\end{ex}
%\subsection{A Gaussian process which dose not satisfy Hypothesis ($H_2'$) but satisfy Hypothesis ($H_2$)}

\section{Discussion: a variant of Hypothesis~\ref{hypthe1}.}\label{sec-dis}
In this section, we always assume that the following Hypothesis~\ref{hypthe6} holds.
 \begin{hyp}\label{hypthe6}
 The covariance function $R(s,t)=E (G_tG_s)$ of the centred Gaussian process $(G_t)_{t\in[0,T ]}$ with $G(0)=0$ satisfies ($H_1$) and 
\begin{enumerate}
 % \item[($H_1$)] for any fixed $s \in[0,T]$, $R(s,t)$ is an absolutely continuous function on $t\in [0,T]$.
  \item[($H_4$)] for any fixed $t \in[0,T]$,  the first-order partial derivative%the mixed second-order partial derivative
$$\frac {\partial }{ \partial t}  R(s,t)$$ 
is a normalized bounded variation function on $s\in  [0,T]$.
  \end{enumerate}
\end{hyp}
To compare with Hypothesis~\ref{hypthe1},  the noticeable difference between Hypothesis~($H_2$) and Hypothesis~($H_4$) is that the Gaussian process $(G) $ satisfying Hypothesis~\ref{hypthe6} is no longer related to the fBm. Thus, Theorem~\ref{prop 2-1} is not valid, although Proposition~\ref{prop 2-1010} still holds. However, we can use  a similar idea to modify Theorem~\ref{prop 2-1}. For example, the following four types of Gaussian processes in the literature satisfy Hypothesis~\ref{hypthe6}. We will study the similar problems in Section~\ref{sec3 applications} for these types of Gaussian processes in a separate paper. Here we only list a few obvious results without proofs. 
%An interesting fact is that almost all these examples satisfy Hypothesis~\ref{hypthe1} (1) and (2) but fail in Hypothesis~\ref{hypthe1} (3) or (4), which implies that Proposition~\ref{prop 2-1} (i) is valid but Proposition~\ref{prop 2-1} (ii) is not valid any more.
%We ignore its proof since it can be obtained by a slight modification from that of Proposition~\ref{prop 2-1} (ii) together with the formula \eqref{01001-new-00}.

Sometimes, Hypothesis ($H_{4}$) is taken as the following special case: 
  \begin{hyp}\label{hypthe2new-new}
The covariance function $R(s,t)=E (G_tG_s)$ of the Gaussian process $(G_t)_{t\in[0,T ]}$ with zero mean satisfies Hypothesis~($H_1$) and \begin{enumerate}
  \item[($H_4'$)] for any fixed $t \in[0,T]$, the partial derivative 
  $\frac {\partial R(s,t)}{\partial t} $
  is an absolutely continuous function on $s\in [0,T]$.
\end{enumerate}
\end{hyp}
\begin{proposition}\label{prop-4-11}
If the covariance function $R(t,s)$ satisfies Hypothesis~\ref{hypthe2new-new}, then
  \begin{align} \label{diyierdengshi 00}
 \langle f,\,g \rangle_{\FH} &=\int_{[0,T]^2}  {f}(t)g(s)  \frac {\partial^2 R(s,t)}{\partial s\partial t}    \dif t \dif s,\qquad \forall f,\, g\in \mathcal{V}_{[0,T]}.
 \end{align} % Especially, when $0 \leq a < b \leq c < d \leq T$, we have the identity \eqref{innp fg3-zhicheng0 tebie} holds.
In addition, if the following Hypothesis~($H_5$) is also satisfied:
\begin{enumerate}     \item[($H_5$)] There exist two positive constants $C$ and $H$, independent of $T$, such that
  \begin{align}\label{bijiao 00 new}
\left| \frac{\partial^2 R(s,t)}{\partial s\partial t} \right| & \le  C(ts )^{H-1},\quad a.s.
\end{align} 
holds.
\end{enumerate}
Then, we can derive the following inequality:
\begin{equation}\label{inequality 210}
\abs{\langle f,\,g \rangle_{\mathfrak{H}}  }\leq  C \mu(\abs{f})\mu(\abs{g}),\qquad \forall f,\, g\in \mathcal{V}_{[0,T]}.
\end{equation} 
\end{proposition}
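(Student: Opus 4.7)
The plan is to follow exactly the template of the proof of Theorem~\ref{prop 2-1}, but the argument becomes genuinely simpler because no subtraction against the fBm kernel is needed; the mixed partial derivative of $R$ itself plays the role that the difference of partials played before. I would start from Proposition~\ref{prop 2-1010}, which applies under Hypothesis~($H_1$) alone and yields
\begin{align*}
\langle f,g\rangle_{\FH} = -\int_{[0,T]^2} f(t)\, \frac{\partial R(t,s)}{\partial t}\, \dif t\, \nu_{g}(\dif s).
\end{align*}
By Fubini's theorem this can be rewritten as an iterated integral with the $\nu_g(\dif s)$ integration on the inside.

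Next I would invoke Hypothesis~($H_4'$): for each fixed $t\in[0,T]$ the function $s\mapsto \partial R(s,t)/\partial t$ is absolutely continuous on $[0,T]$. This is precisely the regularity needed to apply the integration-by-parts formula for the Lebesgue--Stieltjes signed measure $\nu_g$ (Lemma~\ref{partial integral} in the appendix), exchanging roles: the inner integral $\int_{[0,T]}\frac{\partial R(s,t)}{\partial t}\,\nu_g(\dif s)$ becomes $-\int_{[0,T]} g(s)\frac{\partial^2 R(s,t)}{\partial s\,\partial t}\,\dif s$. Combined with the outer minus sign, this delivers the identity~\eqref{diyierdengshi 00}. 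The verification is essentially identical to the corresponding step in the proof of Theorem~\ref{prop 2-1}, where the same lemma was applied to the normalized bounded variation function arising from the difference of partials; here we simply apply it to $\partial R(\cdot,t)/\partial t$ itself.

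For the inequality~\eqref{inequality 210}, I would take absolute values inside \eqref{diyierdengshi 00}, use the pointwise bound~\eqref{bijiao 00 new} from Hypothesis~($H_5$), and split the resulting product kernel by Fubini:
\begin{align*}
\abs{\langle f,g\rangle_{\FH}} \le C\int_{[0,T]^2} \abs{f(t)}\abs{g(s)}\,(ts)^{H-1}\,\dif s\,\dif t = C \Bigl(\int_0^T \abs{f(t)}t^{H-1}\dif t\Bigr)\Bigl(\int_0^T \abs{g(s)}s^{H-1}\dif s\Bigr),
\end{align*}
which is exactly $C\,\mu(\abs{f})\mu(\abs{g})$ with $\mu(\dif x)=x^{H-1}\dif x$, in parallel with the inequality~\eqref{inequality 29} of Corollary~\ref{cor2-2}.

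The main (minor) obstacle is simply double-checking the boundary behavior in the integration-by-parts step: since $G(0)=0$ forces $R(0,t)=0$ and hence $\partial R(0,t)/\partial t=0$, and since $\nu_g$ is defined via the normalized extension $g^0$ (which vanishes outside $[0,T]$ and is assigned value $g(T)$ at $T$), no stray boundary terms appear. Beyond this bookkeeping, the proof is a direct specialization of the machinery already developed in Section~\ref{sec-2}, and no new technical difficulty arises.
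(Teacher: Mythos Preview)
Your proposal is correct and is precisely the ``slight modification to Corollary~\ref{cor2-2}'' that the paper invokes: start from Proposition~\ref{prop 2-1010}, apply Fubini, then use Lemma~\ref{partial integral}(a) to the absolutely continuous function $s\mapsto \partial R(s,t)/\partial t$ itself (rather than to the difference with $\partial R^B/\partial t$), and finally take absolute values and use the product-kernel bound from~($H_5$). The boundary bookkeeping you flag is already absorbed by the construction of $\nu_g$ in Lemma~\ref{partial integral}, so no additional condition on $\partial R(0,t)/\partial t$ is actually needed, but your caution there is harmless.
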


Proposition~\ref{prop-4-11} can be {derived by making a slight modification to} Corollary~\ref{cor2-2}.
We can easy verify that the following two examples satisfy Hypothesis~\ref{hypthe2new-new} and ($H_5$).
\begin{example}
Suppose that $H \in (0, \frac12)\cup(\frac12,1) $, and the constant $C_H$ is given by
\begin{align*}
    C_H=\left\{
      \begin{array}{ll}
\frac{H}{\Gamma(1-2H)}, & \quad  H\in (0,\frac12),\\
\frac{H(2H-1)} {\Gamma(2-2H)}, &\quad H\in (\frac12,1),
 \end{array}
\right.
\end{align*} and $\{W_t,t \ge 0 \}$ is the standard Brownian motion. 
The covariance function of the following Gaussian process
$$X_{t}  = \sqrt{C_H}\int_{0}^{\infty} \left( 1-e^{-r t} \right) r^{-\frac {1+2H}{2}} \dif W_{r} ,\quad t\ge 0$$ is given by
\begin{equation*}
R(s,t)=\left\{
      \begin{array}{ll}
\frac12\left[t^{2H}+s^{2H} -(t+s)^{2H}\right], & \quad  H\in\left(0,\frac12\right),\\
\frac12\left[(t+s)^{2H}-t^{2H}-s^{2H}\right], &\quad H\in\left(\frac12,1\right).
 \end{array}
\right.
\end{equation*}
Please refer to \citealt{Bardina2009, Lei2009}. %Similar to Proposition~\ref{prop 2-1 discuss} and the identity \eqref{guanjiandengshi 000-new2}, we have that if $f,\, g\in \mathcal{V}_{[0,T]}$ then \begin{align}\label{lizi4-8}  \langle f,\,g \rangle_{\FH} &=\int_{[0,T]^2}f(t)g(s)  \frac{\partial^2 R(s,t)}{\partial s \partial t } \dif t\dif s, \end{align}where  \begin{align*}  \frac{\partial^2 R(s,t)}{\partial s \partial t }=H\abs{2H-1} (t+s)^{2H-2}. \end{align*}
 %It is clear that
%\begin{equation*}
%\frac{\partial R(s,t)}{\partial t }-\frac{\partial R^B(s,t)}{\partial t }=\left\{
%      \begin{array}{ll}
% H\big[ -(t+s)^{2H-1} -\abs{t-s}^{2H-1}\sgn{(t-s)}\big], & \,\,\,\,  H\in (0,\frac12) ;\\
% H\big[(t+s)^{2H-1}-2t^{2H-1}+\abs{t-s}^{2H-1}\sgn{(t-s)} \big], &\,\,\,\, H\in (\frac12,1).
% \end{array} \right.   
%\end{equation*}
%When $H\in (0,\frac12)$, it is not continuous at $s=t$ and hence $X_{t} $ does not satisfy Hypothesis~\ref{hypthe1} (3);  and when $H\in (\frac12,1)$, it is differentiable with respect to $s$ in $(0,t)\cup(t,T)$ and hence Hypothesis~\ref{hypthe1} (3) is satisfied on $s\neq t$ but Hypothesis~\ref{hypthe1} (4) is not satisfied any more.
\end{example}

\begin{example}
The covariance function of the trifractional Brownian motion $Z^{H',K}(t) $ with parameters $H' \in (0, 1),\,K \in(0,1)$ 
is given by
$$ R(t,\, s)=t^{2H'K} + s^{2H'K}- (t^{2H'}+s^{2H'})^{K}  .$$
Please refer to  \citealt{Lei2009, ma2013}. The Gladyshev's Theorem for this trifractional Brownian motion is obtained in \cite{Han 21}.
\end{example}
 
The following example does not satisfy ($H_5$), but still satisfies Hypothesis~\ref{hypthe2new-new}.
\begin{example}
Denote $\beta(\cdot,\cdot)$ as the beta function. Suppose that $a>-1, 0<{b}<1\wedge ( 1+a)$ and $2H=a+b+1$. The covariance function of the weighted-fractional Brownian motion $\{B_t^{a,b}, t \geq 0\}$ is given by
\begin{align*}
R(s,t)&=\frac{1}{2\beta(a+1,b+1)}\int_{0}^{s\wedge t}u^a \left[ (t-u)^b +(s-u)^b  \right] \dif u\\ 
&=\frac12\Big[t^{2H}+s^{2H}-\frac{1}{\beta(a+1,b+1)}\int_{s\wedge t}^{s\vee t}u^a (t \vee s -u)^b \dif u\Big].
\end{align*}
Please refer to \citealt{AAE 21, BGT 2007}. By Propositions~\ref{prop-4-11}, we have that if $f,\, g\in \mathcal{V}_{[0,T]}$, then
\begin{align}
 \langle f,\,g \rangle_{\FH} &= \int_{[0,T]^2}f(t)g(s) \frac{\partial^2 R(s,t)}{\partial s \partial t } \dif t\dif s,
\end{align}
where 
\begin{align*}
 \frac{\partial^2 R(s,t)}{\partial s \partial t }=\frac{b}{\beta(a+1,b+1)} (t\wedge s)^{a} \abs{t-s}^{b-1}.
\end{align*}
%\begin{equation*}
%\frac{\partial R(s,t)}{\partial t }-\frac{\partial R^B(s,t)}{\partial t }=\left\{
%      \begin{array}{ll}
%\frac{1}{2\beta(a+1,b+1)} t^a (s-t)^b-H (s-t)^{a+b}, & \quad  s\in (t,T) ;\\
%-\frac{b}{2\beta(a+1,b+1)}  \int_{0}^{s }u^a  (t-u)^{b-1}  \dif u + H (t-s)^{a+b} , &\quad s\in (0,t),
% \end{array} \right.    \end{equation*}
%which is not continuous at $s=t$. Hence ${B_t^{a,b}}$ dose not satisfy Hypothesis~\ref{hypthe1} (3).
\end{example}
Finally, we present a Gaussian process from the literature in which the first-order partial derivative $\frac {\partial }{ \partial t}  R(s,t)$ is not an absolutely continuous function of $s\in [0,T]$.
\begin{example} Suppose that $H \in (0, \frac12)$. The Gaussian process $\{G_t , t\ge 0\} $ has the covariance function 
$$ R(s, t)= \frac12\big[(t+s)^{2H}-(\max(s,t))^{2H} \big] . $$
Please refer to Theorem 1.1 of \cite{Talarczyk2020}.  Clearly,  for any fixed $t\in [0,T]$, the first-order partial derivative\begin{equation*}
\frac {\partial }{ \partial t}  R(s,t)=\left\{
      \begin{array}{ll}
H(t+s)^{2H-1}-H t^{2H-1}, & \quad 0<s\le t,\\
H(t+s)^{2H-1}, & \quad t<s\le T
 \end{array}
\right. 
\end{equation*}
is a bounded variation function of $s\in [0,T]$. It is a linear combination of an absolutely continuous function 
\begin{equation*}
\varphi(s)=H(t+s)^{2H-1}-H t^{2H-1},\quad s\in[0,T],
\end{equation*}and a step function 
\begin{equation*}
\phi(s)=\left\{
      \begin{array}{ll}
0, & \quad 0<s\le t,\\
 H t^{2H-1}, & \quad t<s\le T.
 \end{array}
\right. 
\end{equation*}
Similar to the identity \eqref{guanjiandengshi 000-new1}, we have that if $f,\, g\in \mathcal{V}_{[0,T]}$ and at least one of them is continuous, then
 \begin{align}\label{guanjiandengshi 000-new2}
 \langle f,\,g \rangle_{\FH} &=H(2H-1)\int_{[0,T]^2}f(t)g(s) (t+s)^{2H-2}\dif t\dif s +H \int_0^T  {f}(t) g(t)t^{2H-1}\dif t .
 \end{align}
It is clear in this case the limit \eqref{h jixian} is no longer valid, and hence the conditions of \cite{Nor 11} do not hold. Finally, we point out that if we interpret the mixed second-order partial derivative $$ \frac{\partial }{\partial s}\left(  \frac{\partial R(s,t)}{\partial t }\right)$$ as the distributional derivative of $ \frac{\partial R(s,t)}{  \partial t }$ with respective to the variable $s$, then the identity \eqref{guanjiandengshi 000-new2} can be seen as a special case of the identity \eqref{diyierdengshi 00}. The same viewpoint can also be used to illustrate the identities \eqref{guanjiandengshi 000-new1} and \eqref{diyierdengshi}. %However, we can apply Theorem 3.2 of \cite{Viitasaari19} to this Gaussian process $(G)$ to obtain the Gladyshev's Theorem.
\end{example}
 
\section{Appendix}\label{appendx}
The following lemma is trivial, the reader can refer to \cite{Chenzhou2021, chengu2021}. 
\begin{lemma} \label{upper bound F}
Assume that $\beta>0$ and $\theta>0$. Denote $$ \bar{A}(s)=e^{-\theta s}\int_0^{s} e^{\theta r} r^{\beta -1}\dif r. $$ Then % we have
% \begin{align*}
% \lim_{s\to\infty}\frac{A(s)}{s^{\beta-1}}=\frac{1}{\theta},\qquad \lim_{s\to 0}\frac{A(s)}{s^{\beta-1}}=0,
% \end{align*}which implies that 
there exists a positive constant $C$ such that for any  $s\in [0,\infty)$,
\begin{align*}
% {A}(s)&\le C \times\big(1\wedge s^{\alpha} \big),\\
\bar{A}(s)&\le C \times\left(s^{\beta}\mathbbm{1}_{[0,1]}(s) + s^{\beta-1}\mathbbm{1}_{ (1,\,\infty)}(s)\right)\le  C \times \left (s^{\beta-1} \wedge s^{\beta}\right).
\end{align*}
Especially, when $\beta\in (0,1)$, there exists a positive constant $C$ such that for any  $s\in [0,\infty)$,
\begin{align*}
\bar{A}(s)&\le C \times(1\wedge s^{\beta-1}).
\end{align*}
\end{lemma}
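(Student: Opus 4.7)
The plan is to reduce the statement to elementary calculus estimates via a change of variable, then split the range of $s$ into a small regime and a large regime.

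First, I would make the substitution $u = s - r$ in the defining integral to obtain the more convenient representation
\begin{align*}
\bar{A}(s) \;=\; \int_0^s e^{-\theta u} (s-u)^{\beta-1}\,\dif u.
\end{align*}
This rewrites $\bar{A}$ as a convolution-type integral in which the exponential damping and the polynomial factor are cleanly separated, making the two asymptotic regimes transparent.

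For the regime $s\in[0,1]$, I would use the crude bound $e^{-\theta u}\le 1$ on $[0,s]$ to get $\bar{A}(s)\le \int_0^s (s-u)^{\beta-1}\,\dif u = s^{\beta}/\beta$, which yields the $s^{\beta}\mathbbm{1}_{[0,1]}(s)$ part of the first claim. For the regime $s>1$, I would split the interval of integration at $s/2$. On $[0,s/2]$ one has $(s-u)^{\beta-1}\le C_{\beta} s^{\beta-1}$ (with $C_\beta=1$ if $\beta\ge 1$ and $C_\beta=2^{1-\beta}$ if $\beta<1$), hence this piece is controlled by $C_\beta s^{\beta-1}\int_0^{\infty}e^{-\theta u}\,\dif u = (C_\beta/\theta)\,s^{\beta-1}$. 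On $[s/2,s]$ one has $e^{-\theta u}\le e^{-\theta s/2}$ and $\int_{s/2}^{s}(s-u)^{\beta-1}\,\dif u = (s/2)^{\beta}/\beta$, and the resulting factor $e^{-\theta s/2}(s/2)^{\beta}$ decays exponentially in $s$, so it is bounded by $C s^{\beta-1}$ for $s\ge 1$. Combining both pieces gives the $s^{\beta-1}\mathbbm{1}_{(1,\infty)}(s)$ part.

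The middle inequality in the statement, namely that $s^{\beta}\mathbbm{1}_{[0,1]}(s)+s^{\beta-1}\mathbbm{1}_{(1,\infty)}(s)\le s^{\beta-1}\wedge s^{\beta}$, is in fact an equality: on $(0,1]$ one has $s^{\beta}\le s^{\beta-1}$, while on $(1,\infty)$ the reverse holds. The specialization to $\beta\in(0,1)$ is then immediate, because on $[0,1]$ one has $s^{\beta}\le 1$ while simultaneously $s^{\beta-1}\ge 1$, so the first bound collapses to $C$ and the mixed upper bound $C(1\wedge s^{\beta-1})$ reproduces both regimes.

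I do not anticipate any real obstacle; the only point requiring modest attention is carrying the constant $C_\beta$ through the splitting step, since the direction of the inequality $(s-u)^{\beta-1}$ versus $s^{\beta-1}$ depends on the sign of $\beta-1$. Apart from this bookkeeping, the argument is a standard Laplace-type estimate, which is also why the lemma is qualified as trivial in the text.
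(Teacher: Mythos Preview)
Your proof is correct and complete. The paper itself omits the proof entirely, declaring the lemma trivial and referring the reader to \cite{Chenzhou2021, chengu2021}, so your argument via the substitution $u=s-r$ followed by the $[0,s/2]\cup[s/2,s]$ split for large $s$ is a standard and fully adequate way to supply the missing details.
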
 
\begin{lemma}\label{upper bound F-1}
Suppose that $H\in (0,1)$.
There exists a positive constant $C$ such that for all $t> 0$,
\begin{align*}
\int_0^{\infty}e^{-x} \dif x\int_0^t e^{y-t} (x+y)^{2H-2} \dif y\le C(1\wedge t^{2H-2}).
\end{align*}
\end{lemma}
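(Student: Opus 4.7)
The plan is to exchange the order of integration, evaluate the $x$-integral in closed form, and then handle bounded and large $t$ separately. Denoting the left-hand side by $I(t)$, Tonelli and the substitution $u=x+y$ in the $x$-integral give
\[
I(t)=\int_0^t e^{y-t}g(y)\,\dif y,\qquad g(y):=e^y\int_y^\infty e^{-u}u^{2H-2}\,\dif u.
\]
Two elementary estimates on $g$ drive everything. First, since $2H-2<0$, on $y\ge 1$ one has $u^{2H-2}\le y^{2H-2}$ for $u\ge y$, so $g(y)\le y^{2H-2}$. Second, splitting $\int_y^\infty=\int_y^1+\int_1^\infty$ on $0<y\le 1$ shows $g$ is integrable on $(0,1]$ for every $H\in(0,1)$ (algebraic singularity $\sim y^{2H-1}$ when $H<\tfrac12$, logarithmic at $H=\tfrac12$, bounded when $H>\tfrac12$); in particular $\int_0^1 g(y)\,\dif y\le C$.

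In the regime $t\le 2$ the bound $e^{y-t}\le e^2$ on $[0,t]$ combined with the two estimates above yields $I(t)\le C$, which matches $1\wedge t^{2H-2}$ up to a constant on this range since $t^{2H-2}$ is bounded away from $0$ on $[0,2]$ when coupled with the ``$1\wedge$''. In the regime $t>2$, I split $I(t)=\int_0^1+\int_1^t$. The first piece is at most $e^{1-t}\int_0^1 g(y)\,\dif y=O(e^{-t})$, which is dominated by $t^{2H-2}$. For the second piece, applying the bound $g(y)\le y^{2H-2}$ and substituting $z=t-y$ converts it to $\int_0^{t-1}e^{-z}(t-z)^{2H-2}\,\dif z$. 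I split this at $z=t/2$: on $[0,t/2]$, use $(t-z)^{2H-2}\le (t/2)^{2H-2}$ and $\int_0^{t/2}e^{-z}\,\dif z\le 1$, giving a contribution of order $t^{2H-2}$; on $[t/2,t-1]$, $e^{-z}\le e^{-t/2}$ gives an exponentially small contribution. Summing yields $\int_1^t e^{y-t}y^{2H-2}\,\dif y\le Ct^{2H-2}$ for $t>2$, completing the proof.

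The main obstacle I anticipate is the irregularity of $g$ near the origin when $H\le\tfrac12$, where $g(y)\to\infty$; the resolution is that only $L^1$-integrability of $g$ on $(0,1]$ is needed, not a pointwise bound, and this integrability is uniform in $H\in(0,1)$ after a brief case split at $H=\tfrac12$. An alternative approach using integration by parts on $\int_1^t e^{y-t}y^{2H-2}\,\dif y$ and absorption is tempting but fails precisely in the range $H\le\tfrac12$, whereas the substitution $z=t-y$ and splitting at $z=t/2$ treats both small-$H$ and large-$H$ regimes uniformly without requiring any absorption step.
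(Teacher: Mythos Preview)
Your proof is correct and takes a genuinely different route from the paper. The paper's argument handles the two pieces of $1\wedge t^{2H-2}$ separately by two distinct devices: for the uniform-in-$t$ bound it uses the AM--GM inequality $x+y\ge 2\sqrt{xy}$ to factorize $(x+y)^{2H-2}\le C(xy)^{H-1}$ and then invokes the preceding Lemma (the estimate on $\bar A(s)=e^{-\theta s}\int_0^s e^{\theta r}r^{\beta-1}\,\dif r$); for the $t^{2H-2}$ decay it computes the limit $\lim_{t\to\infty}I(t)/t^{2H-2}=1$ via L'H\^opital's rule and the monotone convergence theorem. Your approach is more self-contained: after rewriting $I(t)=\int_0^t e^{y-t}g(y)\,\dif y$ you rely only on the pointwise bound $g(y)\le y^{2H-2}$ for $y\ge 1$ and the $L^1$-integrability of $g$ near $0$, and the substitution $z=t-y$ with a split at $z=t/2$ delivers the large-$t$ behaviour by direct estimation rather than an asymptotic limit. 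What the paper's route buys is brevity and a sharper asymptotic statement (the limit is exactly $1$); what yours buys is independence from the auxiliary lemma, avoidance of L'H\^opital, and a fully explicit chain of inequalities that works uniformly over $H\in(0,1)$ without appealing to any limiting argument.
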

\begin{proof}
First, by the inequality $x+y\ge 2\sqrt{xy}$ and Lemma~\ref{upper bound F}, the above integral is uniformly bounded for all $t> 0$. 

Second, it follows from L'H\^opital's rule and the monotone convergence theorem that 
\begin{align*}
\lim_{t\to \infty} \frac{\int_0^t e^{y} \dif y\int_0^{\infty}e^{-x}(x+y)^{2H-2}  \dif x }{e^t t^{2H-2}}&=\lim_{t\to \infty} \frac{e^t \int_0^{\infty} e^{-x}(x+t)^{2H-2}  \dif x } {e^t t^{2H-2}}\\
&=\lim_{t\to \infty}\int_0^{\infty} e^{-x} (1+\frac{x}{t})^{2H-2}\dif x = \int_0^{\infty} e^{-x} \dif x =1,
\end{align*}
which implies that there exists a positive constant $C$ such that for all $t> 0$,
\begin{align*}
\int_0^{\infty}e^{-x} \dif x\int_0^t e^{y-t} (x+y)^{2H-2} \dif y\le C  t^{2H-2}.
\end{align*}
\end{proof}

\subsection{Integration by parts of signed Lebesgue-Stieltjes measure}
%We say a bound variation function $F$ on $\R$ is normalized 
%By the near uniqueness of the Lebesgue-Stieltjes measure $\mu_F$ of the bounded variation function $F$ \cite[p.159]{tao2011}, we can always assume that $F$ is  normalized. 
Denote $\mu_F$ the signed Lebesgue-Stieltjes measure of the bounded variation function $F$. The following integration by parts formula is taken from Exercise 3.34 (b) of \cite{Foll 99}.
\begin{lemma}
Suppose that $-\infty<a<b<\infty$ and $F,G$ are normalized bound variation on $\R$. If there are no points in $[a,b]$ where $F$ and $G$ are both discontinuous, then 
\begin{align}\label{int by parts Folland 00}
 \int_{[a,b]} G \mu_F(\dif x)+\int_{[a,b]} F\mu_G(\dif x)=F(b)G(b)-F(a-)G(a-).
\end{align}
If, in addition, $F$ is absolutely continuous on $[a,b]$, then 
\begin{align}\label{int by parts Folland}
\int_{[a,b]} G F'\dif x + \int_{[a,b]} F\mu_G(\dif x)=F(b)G(b)-F(a-)G(a-).
\end{align}
\end{lemma}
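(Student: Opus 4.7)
The plan is to prove the first identity \eqref{int by parts Folland 00} by a single application of Fubini's theorem to the product signed measure $\mu_F\otimes\mu_G$ on the square $[a,b]^2$, then to deduce \eqref{int by parts Folland} as an immediate specialization. The starting point is the pointwise identity $\mathbf{1}_{\{x\le y\}}+\mathbf{1}_{\{y\le x\}}=1+\mathbf{1}_{\{x=y\}}$ on $[a,b]^2$, which neatly relates the two iterated integrals to the product of the marginals plus a diagonal correction.

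First, I would compute each triangular contribution by one Fubini step: since $F,G$ are normalized, $\mu_F([a,y])=F(y)-F(a-)$ and $\mu_G([a,x])=G(x)-G(a-)$, so that
\begin{align*}
\int_{[a,b]^2}\mathbf{1}_{\{x\le y\}}\,\mu_F(\dif x)\mu_G(\dif y)&=\int_{[a,b]}\bigl[F(y)-F(a-)\bigr]\,\mu_G(\dif y),\\
\int_{[a,b]^2}\mathbf{1}_{\{y\le x\}}\,\mu_F(\dif x)\mu_G(\dif y)&=\int_{[a,b]}\bigl[G(x)-G(a-)\bigr]\,\mu_F(\dif x).
\end{align*}
Summing these two equalities and invoking the pointwise identity above gives
\begin{align*}
\int_{[a,b]}F\,\mu_G(\dif y)+\int_{[a,b]}G\,\mu_F(\dif x)&-F(a-)\,\mu_G([a,b])-G(a-)\,\mu_F([a,b])\\
&=\mu_F([a,b])\,\mu_G([a,b])+D,
\end{align*}
where $D:=\int_{[a,b]}\mu_F(\{y\})\,\mu_G(\dif y)=\sum_{y\in[a,b]}\Delta F(y)\,\Delta G(y)$ is the diagonal contribution and $\Delta F(y):=F(y)-F(y-)$ is the jump of $F$.

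The \textbf{main obstacle} is disposing of the diagonal term $D$, and this is exactly where the hypothesis enters: at every $y\in[a,b]$ at least one of $\Delta F(y)$, $\Delta G(y)$ vanishes (each jump is supported on the countable discontinuity set of its function), so each summand is zero and hence $D=0$. Substituting $\mu_F([a,b])=F(b)-F(a-)$ and $\mu_G([a,b])=G(b)-G(a-)$ into the identity above and rearranging makes all mixed terms telescope, leaving precisely $F(b)G(b)-F(a-)G(a-)$ on the right. The remaining care is purely bookkeeping of endpoints: the normalization of $F$ and $G$ (right continuity and vanishing at $-\infty$) guarantees that the Lebesgue-Stieltjes measure of the half-open interval $[a,b]$ is $F(b)-F(a-)$, matching what the computation requires.

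Finally, the second identity \eqref{int by parts Folland} follows as a direct corollary: if $F$ is absolutely continuous on $[a,b]$, then $F$ has no discontinuities there and the common-discontinuity hypothesis is vacuous, while the Radon-Nikodym theorem gives $\mu_F(\dif x)=F'(x)\,\dif x$, so substitution into \eqref{int by parts Folland 00} yields \eqref{int by parts Folland}.
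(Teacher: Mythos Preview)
Your argument is correct: the Fubini computation on the product measure $\mu_F\otimes\mu_G$ together with the indicator identity $\mathbf{1}_{\{x\le y\}}+\mathbf{1}_{\{y\le x\}}=1+\mathbf{1}_{\{x=y\}}$ is exactly the standard way to establish this integration-by-parts formula, and you handle the diagonal term and the endpoint bookkeeping properly. The paper itself does not prove this lemma at all; it simply quotes the statement as Exercise~3.34(b) of Folland's \emph{Real Analysis} and refers the reader there, so your write-up in fact supplies what the paper omits by citation.
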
 
Please also refer to Exercise 1.7.13  of \cite{tao2011} for the formula \eqref{int by parts Folland}.

%We summary those result as a Lemma.
\begin{lemma}\label{partial integral}
Suppose that $[a,b]$ is a compact interval with positive length. Denote $\mathcal{V}_{[a,b]}$ as the set of bounded variation functions on $[a,b]$ and denote  $\nu_g$ as the restriction on $\left({[a,b]},\mathcal{B}({[a,b]})\right)$ of the signed \textnormal{Lebesgue-Stieljes} measure $\mu_{g^0}$ on $\left(\Rnum,\mathcal{B}(\Rnum)\right)$  of
\begin{equation*}
g^0(x)=\left\{
    \begin{array}{ll}
g(x), & \quad \text{if}~x\in [a,b],\\
0, &\quad \text{otherwise}.
 \end{array}
\right.
\end{equation*}
where $g\in\mathcal{V}_{[a,b]}$. 
\begin{enumerate}  \item[(a)]
If $f: [a,b]\rightarrow\R$ is absolutely continuous on~$[a,b]$ and $g\in\mathcal{V}_{[a,b]}$,  then we have
\begin{align}
  -\int_{[a,b]}g(t) f'(t)\dif t=\int_{[a,b]}f(t) {\nu_g}(\dif t). \label{01}
\end{align}
 \item[(b)] If %$f: [a,b]\rightarrow\R$  is a piecewise constant jump function 
$f,\,g\in\mathcal{V}_{[a,b]}$ and if there are no points in $[a,b]$ where $f$ and $g$ are both discontinuous, then we have
 \begin{align}
	-\int_{[a,b]}g(t)  \mu_f(\dif t)=\int_{[a,b]}f(t) {\nu_g}(\dif t). \label{01001}
\end{align}
 \end{enumerate} 
\end{lemma}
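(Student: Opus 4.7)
The plan is to reduce both parts of the lemma to Folland's integration-by-parts formulas (\ref{int by parts Folland 00}) and (\ref{int by parts Folland}), using the measure decomposition (\ref{basic decomp})--(\ref{g02 cedu}) from Section~\ref{sec-2} adapted to the interval $[a,b]$. Specifically, I write $g^0 = g_1^0 - g_2^0$, where $g_1^0$ equals $g$ on $[a,b]$, vanishes on $(-\infty, a)$, and equals $g(b)$ on $(b, \infty)$, while $g_2^0 = g(b) \mathbf{1}_{[b, \infty)}$. Both are normalized BV functions on $\R$, and the restriction of $\mu_{g_1^0} - \mu_{g_2^0} = \mu_{g^0}$ to $[a,b]$ coincides with $\nu_g$.

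For part (a), the plan is to extend $f$ to an absolutely continuous function $\tilde f$ on $\R$ by declaring $\tilde f \equiv f(a)$ on $(-\infty, a]$ and $\tilde f \equiv f(b)$ on $[b, \infty)$; then $\tilde f$ is continuous at both endpoints and $\tilde f' = f' \mathbf{1}_{[a,b]}$ almost everywhere. Applying (\ref{int by parts Folland}) to $(\tilde f, g_1^0)$ on $[a,b]$ produces the boundary term $\tilde f(b) g_1^0(b) - \tilde f(a-) g_1^0(a-) = f(b) g(b)$ (using $g_1^0(a-) = 0$), and applying it to $(\tilde f, g_2^0)$ yields the same boundary value $f(b)g(b)$. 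Subtracting the two identities annihilates the boundary terms; then, using $g_1^0 - g_2^0 = g$ on $[a,b)$ (a set of full Lebesgue measure in $[a,b]$) together with $\mu_{g_1^0} - \mu_{g_2^0} = \mu_{g^0}$ whose restriction to $[a,b]$ is $\nu_g$, the identity (\ref{01}) follows.

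For part (b), I would reuse $\tilde f$, now viewed as a normalized BV function on $\R$ (it inherits the BV property from $f \in \mathcal{V}_{[a,b]}$), and apply (\ref{int by parts Folland 00}) to each of $(\tilde f, g_1^0)$ and $(\tilde f, g_2^0)$. The key technical check is the no-common-discontinuity hypothesis: the discontinuities of $\tilde f$ in $[a,b]$ are exactly those of $f$ in $(a,b)$, since the chosen extension is continuous at $a$ and $b$, whereas the discontinuities of $g_i^0$ in $[a,b]$ are either discontinuities of $g$ in $(a,b)$ or lie at the endpoints themselves, where $\tilde f$ is continuous; the hypothesis of the lemma thus forces disjoint discontinuity sets. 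Subtracting the two Folland identities, the boundary terms again cancel, and recognising that $\mu_{\tilde f}|_{[a,b]} = \mu_f$ (since $\tilde f$ is continuous at the endpoints and agrees with $f$ on $[a,b]$) yields (\ref{01001}).

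The main obstacle is the careful bookkeeping distinguishing $\mu_f$ from $\nu_f$: the former carries no atoms at the endpoints (matching the Lebesgue-Stieltjes measure of a BV function considered intrinsically on $[a,b]$), whereas $\nu_g$ inherits potential atoms at $a$ and $b$ from the jumps of $g^0$ coming out of and returning to zero. The choice of a continuous (rather than zero-padded) extension $\tilde f$ is precisely what makes $\mu_{\tilde f}|_{[a,b]} = \mu_f$ and simultaneously ensures that the no-common-discontinuity hypothesis is not artificially violated at the endpoints $a$ and $b$.
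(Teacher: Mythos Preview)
Your proposal is correct and follows the same approach as the paper: decompose $g^0 = g_1^0 - g_2^0$ and invoke Folland's integration-by-parts formulas (\ref{int by parts Folland 00})--(\ref{int by parts Folland}). The paper's argument is slightly terser---it applies Folland's identity once (to the pair $(f,g_1^0)$) and reads off $\int_{[a,b]} f\,\mu_{g_2^0}(\dif x) = f(b)g(b)$ directly from $\mu_{g_2^0}=g(b)\delta_b$, and it leaves the extension of $f$ and the endpoint no-common-discontinuity check implicit---but the substance is identical.
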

\begin{remark}
\begin{enumerate}
%  \item[(1)] There is another way to show this Lemma, please refer to \cite{withdingzhen}. 
  \item[(1)]   The measure $\nu_g$ or $\mu_{g^0}$ may have atoms, i.e.,
  \begin{align*}
 \nu_g(\{a\})&=g^0(a+)-g^0(a-)= g(a+)-0=g(a+),\\
   \nu_g(\{b\})&=g^0(b+)-g^0(b-)=0-g(b-)=-g(b-).
 \end{align*} 
%For example, suppose that $g=h\cdot \1_{[c,d]}$ where $a\le c<d\le b$ and $h$ is a differentiable function. Then the Lebesgue-Stieljes signed measure $\nu_{g}$ on  $([0,T ], \mathcal{B}([0,T ]))$ has an expression:
%\begin{equation}\label{jieshi01}  \nu_g(\dif x)=  h'(x)\cdot \1_{[c,d]}(x)\dif x+  h(x)\cdot \big(\delta_c(x)-\delta_d(x)\big)\dif x,\end{equation}
%where $\delta_c(\cdot)$ is the Dirac delta function centered at a point $c$. In this case, the measure $ \nu_g $ may have two atoms $\set{c}$ and $\set{d}$.
  \item[(2)]  Another method to derive the integration by parts formula \eqref{01} builds
upon \cite{Jolis 2007}. For further details, please refer to \cite{withdingzhen}. 
   \item[(3)] If $f: [a,b]\rightarrow\R$  is a piecewise constant jump function, then the Lebesgue-Stieltjes measure $\mu_f$ can be represented as
\begin{align*}
\mu_{f}=\sum_{n} c_n\delta_{x_n}(\cdot),
\end{align*} where $\delta_x(\cdot)$ is the Dirac measure at $x$. The formula \eqref{01001} can be rewritten as:
\begin{align}\label{01001-new-00}
	-\sum_n c_n g(x_n) =\int_{[a,b]}f(t) {\nu_g}(\dif t).
\end{align}
\end{enumerate}
 \end{remark}
 
% {\bf Proof of Lemma~\ref{partial integral}}:
\begin{proof}
%Now suppose that $-\infty<a<b<\infty$. For each $g\in \mathcal{V}_{[a,b]}$, we denote $\nu_{g}$ the restriction to $([a,b ], \mathcal{B}([a,b ]))$ of the Lebesgue-Stieljes signed measure $\mu_{g^0}$ of $g^0$ defined as
%\begin{equation*}g^0(x)=\left\{       \begin{array}{ll}  g(x), & \quad \text{if } x\in [a,b] ;\\ 0, &\quad \text{otherwise }.       \end{array} \right. \end{equation*}
We decompose $g^0(x)$ into the difference of two normalized bounded variation functions:
\begin{align}\label{basic decomp}
g^0(x)=g_1^0(x)-g_2^0(x),
\end{align}
where 
\begin{equation*}
g_1^0(x)=\left\{
      \begin{array}{ll}
      0,&\quad \text{if } x<a,\\
 g(x), & \quad \text{if } x\in [a,b],\\
g(b), &\quad \text{if } x>b,    
 \end{array}
\right.
\end{equation*}
and 
\begin{equation*}
g_2^0(x)=\left\{
      \begin{array}{ll}
      0,&\quad \text{if } x<b,\\
g(b), &\quad \text{if } x>b.    
 \end{array}
\right.
\end{equation*} 
Recall the two identities \eqref{ceducha}-\eqref{g02 cedu}: 
\begin{equation*}
\mu_{g^0} =\mu_{g^0_1}-\mu_{g^0_2},\qquad
\mu_{g^0_2}=g(T) \delta_T(\cdot).
\end{equation*}
(a): If $f$ is absolutely continuous on $[a,b]$, then the integration by parts formula \eqref{int by parts Folland} implies that 
\begin{align}\label{int by parts Folland 000}
\int_{[a,b]} g f'\dif x + \int_{[a,b]} f  {\mu_{g^0_1} }(\dif x)=g(b)f(b)=\int_{[a,b]} f  {\mu_{g^0_2} }(\dif x),
\end{align}
which further implies that  if $f$ is absolutely continuous on $[a,b]$, then 
\begin{align}
-\int_{[a,b]} g f'\dif x =\int_{[a,b]} f  {\mu_{g^0} }(\dif x).
\end{align}

(b): 
Since there are no points in $[a,b]$ where $f$ and $g$ are both discontinuous, the identity \eqref{int by parts Folland 00} implies that 
\begin{align*}
-\int_{[a,b]} g \mu_f(\dif x) =\int_{[a,b]} f  {\mu_{g^0_1} }(\dif x)-\int_{[a,b]} f  {\mu_{g^0_2} }(\dif x) =\int_{[a,b]} f  {\mu_{g^0} }(\dif x).
\end{align*}
\end{proof}
% {\hfill\large{$\Box$}} 

%Here we point out that 
 
%\subsection{Proof of theorem~\ref{prop 2-1}}

\subsection{Calculating the second, third and fourth cumulants of $W_T$}
% {\bf Proof of Proposition~\ref{key prop bijiao}}:  {\hfill\large{$\Box$}} 
 In this subsection, we always assume that either the fractional Gaussian process $(G)$ satisfies Hypothesis~ ($H_2'$)-($H_3$) or is given as in Example~\ref{exmp lizi001}.

\begin{notation}
We denote a second Wiener chaos with respect to the fBm $(B^H)$ as follows:
\begin{align}
%W_T&=\frac{1}{\sqrt {T}}\int_{0}^T \big( \zeta_t^2 -\E[  \zeta_t^2] \big)\dif t,\label{wt dyi}\\
D_T-\E D_T&=\frac{1}{\sqrt {T}}\int_{0}^T \left(\eta_t^2 -\E[\eta_t^2] \right)\dif t.
\end{align}
\end{notation}
%\subsection{Second moment and third and fourth cumulant of $W_T$}
\begin{proposition}\label{sijiejun juli qidian} 
Let $ W_T$ and $\sigma^2_B$ be given as in \eqref{wt dyi} and \eqref{sb2 dyi} respectively. Then we have 
 \begin{align}
\abs{\E[W_T^2]-\sigma^2_B}\le \frac{C}{\sqrt{T}} \quad \text{or}\quad  \frac{C}{ {T}^{ (1-2H)\wedge\frac12} },
\end{align}
when $(G)$ satisfies Hypothesis~ ($H_2'$)-($H_3$) or  is given as {described} in Example~\ref{exmp lizi001} respectively.
\end{proposition}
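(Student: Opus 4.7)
The plan is to compare $\E[W_T^2]$ with the analogous fBm-driven quantity $\E[D_T^2]$ and decompose
\begin{align*}
\E[W_T^2]-\sigma_B^2 = \left(\E[W_T^2]-\E[D_T^2]\right) + \left(\E[D_T^2]-\sigma_B^2\right).
\end{align*}
The second summand is the classical asymptotic-variance correction for the fBm-driven ergodic OU process, and by the cumulant estimates used in the proof of Theorem 3.2 of \cite{BBES 23} (or the explicit computations in \cite{chenkl2020}) it is bounded by $C/\sqrt{T}$. By \eqref{wt2 bds} the first summand equals
\begin{align*}
\frac{2}{T}\int_{[0,T]^2}\left(\rho_1(t,s)-\rho_2(t,s)\right)\left(\rho_1(t,s)+\rho_2(t,s)\right)\,\dif t\,\dif s,
\end{align*}
so the task reduces to estimating this double integral using the comparison bounds in Proposition~\ref{key prop bijiao}.

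For Case (i), where $(G)$ satisfies Hypothesis~($H_2'$)-($H_3$), the estimates \eqref{rho2 guji}--\eqref{rho21ts cha bds}, together with $|\rho_1|\le|\rho_2|+|\rho_1-\rho_2|$, yield on the triangle $0\le s\le t\le T$
\begin{align*}
|\rho_1-\rho_2|\le C\left(1\wedge s^{2(H-1)}\wedge (t-s)^{H-1}\right),\qquad |\rho_1+\rho_2|\le C\left(1\wedge (t-s)^{2(H-1)}\right).
\end{align*}
I would then split $\{0\le s\le t\le T\}$ into the four regions determined by whether $s\le 1$ or $s>1$ and whether $t-s\le 1$ or $t-s>1$, and verify that each contributes at most $O(1)$ to the integral: on the corner $\{s\le 1,\,t-s\le 1\}$ the integrand is uniformly bounded on a bounded set; on $\{s\le 1,\,t-s>1\}$ one uses the product bound $(t-s)^{3H-3}$, which is integrable since $H<1/2$; on $\{s>1,\,t-s\le 1\}$ the factor $s^{2H-2}$ alone is integrable at infinity for the same reason; on $\{s>1,\,t-s>1\}$ one uses $s^{2H-2}(t-s)^{2H-2}$, again doubly integrable. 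This gives $|\E[W_T^2]-\E[D_T^2]|\le C/T$, which is absorbed into the $C/\sqrt{T}$ bound from the fBm part.

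For Case (ii), the estimate \eqref{rho21ts cha bds-new case} gives $0\le \rho_2-\rho_1\le Ce^{(s-t)\theta}\left(1\wedge s^{2H-1}\right)$ on $0\le s\le t$. Using $|\rho_1|,|\rho_2|\le C$ uniformly and the fact that integrating the exponential kernel in $s$ over $[0,t]$ contributes only an $O(1)$ factor, one obtains
\begin{align*}
\left|\E[W_T^2]-\E[D_T^2]\right|\le \frac{C}{T}\int_0^T \left(1\wedge t^{2H-1}\right)\,\dif t\le C\, T^{2H-1},
\end{align*}
since $H<1/2$ makes $\int_0^T(1\wedge t^{2H-1})\,\dif t=O(T^{2H})$. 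Combined with the fBm contribution $C/\sqrt{T}$, this yields the claimed rate $C/T^{(1-2H)\wedge\frac12}$. The main obstacle, as I see it, is obtaining the sharp $1/\sqrt{T}$ bound on $|\E[D_T^2]-\sigma_B^2|$ from the fBm side; to extract that rate cleanly one has to use the explicit form \eqref{rho2ts bds} of $\rho_2(t,s)$ combined with Lemma~\ref{upper bound F-1}, rather than simply passing to the limit $T\to\infty$.
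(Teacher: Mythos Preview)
Your overall strategy---decomposing $\E[W_T^2]-\sigma_B^2$ into the fBm baseline $\E[(D_T-\E D_T)^2]-\sigma_B^2$ (handled by \cite{BBES 23}) plus the comparison integral $\frac{2}{T}\int(\rho_1^2-\rho_2^2)$, then factoring $\rho_1^2-\rho_2^2=(\rho_1-\rho_2)(\rho_1+\rho_2)$ and invoking Proposition~\ref{key prop bijiao}---is exactly the paper's approach.

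There is, however, a gap in Case~(i). You assert $|\rho_1+\rho_2|\le C\big(1\wedge(t-s)^{2(H-1)}\big)$, but the triangle inequality you invoke only yields
\[
|\rho_1+\rho_2|\le 2|\rho_2|+|\rho_1-\rho_2|\le C\Big[(1\wedge(t-s)^{2(H-1)})+(1\wedge(t-s)^{H-1})\Big]=C\big(1\wedge(t-s)^{H-1}\big),
\]
since for $t-s>1$ the exponent $H-1$ dominates $2(H-1)$. With this corrected bound the region $\{s>1,\,t-s>1\}$ contributes $\int_1^\infty s^{2H-2}\,ds\cdot\int_1^{T}u^{H-1}\,du=O(T^H)$, not $O(1)$; hence $|\E[W_T^2]-\E[D_T^2]|\le CT^{H-1}$ rather than $C/T$. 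This is precisely what the paper obtains, and because $H<\tfrac12$ gives $T^{H-1}=o(T^{-1/2})$ your final conclusion survives. In Case~(ii) your computation is correct and matches the paper's $CT^{2H-1}$, although the phrase ``integrating the exponential kernel in $s$ over $[0,t]$ contributes only an $O(1)$ factor'' is misleading: the clean route is to integrate $e^{(s-t)\theta}$ in $t$ over $[s,T]$ first, leaving $\frac{C}{T}\int_0^T(1\wedge s^{2H-1})\,ds$.
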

\begin{proof}
First, recall that \cite{BBES 23}
\begin{align*}
\abs{\E\Big[\big(D_T-\E D_T\big) ^2\Big]-\sigma^2_B}\le \frac{C}{\sqrt{T}}.
\end{align*}%Then recall the notation \eqref{rho12}.  
It follows from the identity \eqref{wt2 bds} that
\begin{align*} 
 { \E[W_T^2]-\E[\left(D_T-\E D_T\right) ^2] }=\frac{4}{{T}}{\int_{ 0\le s<t\le T} \left(\rho_1^2(t,s)- \rho_2^2(t,s)\right)\dif t\dif s}. 
\end{align*}
Hence, it is clear that we only need to show the following upper bounds in the two different cases:
\begin{align}\label{cha rho120}
 \abs{\int_{ 0\le s<t\le T} \big(\rho_1^2(t,s)- \rho_2^2(t,s)\big)\dif t\dif s} \le  {C}T^H  \text{\,\, or\,\, } {C}{ {T}^{2H} }.
\end{align}

Case 1: When $(G)$ satisfies Hypothesis~ ($H_2'$)-($H_3$) and $H\in (0,\frac12)$.
It follows from Proposition~\ref{key prop bijiao} that  for any $0\le s<t$,
\begin{align*}
\abs{\rho_1 (t,s)- \rho_2 (t,s)}\le C\left(1\wedge s^{2(H-1)} \right),\quad \abs{\rho_1 (t,s)+\rho_2 (t,s)}\le C \left(1\wedge (t-s)^{H-1}\right), 
\end{align*}
This implies that:
\begin{align*}
&\abs{\int_{ 0\le s<t\le T} \left(\rho_1 (t,s)- \rho_2 (t,s)\right) \left(\rho_1 (t,s)+\rho_2 (t,s)\right) \dif t\dif s}\notag\\
&\le C\int_{ 0\le s<t\le T} \left(1\wedge s^{2(H-1)}\right) \left(1\wedge (t-s)^{H-1}\right)\dif t\dif s\notag\\
&<C \int_0^{\infty} \left(1\wedge s^{2(H-1)} \right)\dif s\int_s^{T} \left(1\wedge (t-s)^{H-1}\right)\dif t<CT^H\label{diyibufen}
\end{align*}
since $H\in (0,\frac12)$.%, the upper bound \eqref{cha rho120} holds. % and it follows from the inequality \eqref{zhongjian buds00} that
%\begin{align}
% \abs{\int_{ 0\le s<t\le T} \big(\rho_1 (t,s)- \rho_2 (t,s)\big)^2\dif t\dif s} 
%&\le C\int_{ 0\le s<t\le T}  \Big[e^{s-t}\big(1\wedge s^{2(H-1)} \big) + \int_0^s e^{ {u}-s} u^{H-1}  \dif  {u} \int_s^t e^{ {v}-t}  v^{H-1}  \dif  {v}\Big]^2  \dif t\dif s\notag\\
%&<C, 
%&< C\int_{ 0\le s<t\le T} \big(1\wedge s^{2(H-1)} \big) (t-s)^{H-1} \dif t\dif s \notag\\&<CT^{H}\label{diyibufen00}
%\end{align}%where the last line is again from Lemma \ref{gjbdshi 1}. 
%Clearly, 
%\begin{align*} \rho_1^2(t,s)- \rho_2^2(t,s)=\big(\rho_1 (t,s)- \rho_2 (t,s)\big)^2+2 \big(\rho_1 (t,s)- \rho_2 (t,s)\big)  \rho_2 (t,s), \end{align*} which, together with the inequalities \eqref{diyibufen} and \eqref{diyibufen00}, implies the inequality \eqref{cha rho120}.

Case 2: When $(G)$ is given as in Example~\ref{exmp lizi001}. Proposition~\ref{key prop bijiao} implies that for all $0\le s<t$,
\begin{align*}
\abs{\rho_1 (t,s)+\rho_2 (t,s)}\le 2  \abs{\rho_2 (t,s)} + \abs{\rho_2(t,s)-\rho_1(t,s)}\le C \left(1\wedge (t-s)^{2(H-1)}\right).
\end{align*}
Since $H\in (0,\frac12)$, we have
\begin{align*}
\abs{\int_{ 0\le s<t\le T} \left(\rho_1^2 (t,s)- \rho_2^2 (t,s)\right)\dif t\dif s} 
&\le C\int_{ 0\le s<t\le T} s^{2H-1} \left(1\wedge(t-s)^{2(H-1)}\right)\dif t\dif s \\
&\le C \int_0^{T}  s^{2H-1}  \dif s \le CT^{2H }.%, \label{diyibufen}
\end{align*} 

%it follows from the Cauchy-Schwarz inequality, Lemma~\ref{upper bound F}  that
% \begin{align*}%\label{cha rho120-new}
%0\le {\int_{ 0\le s<t\le T} \big( \rho_2^2(t,s)-\rho_1^2(t,s)\big)\dif t\dif s} &\le 2 \int_{ 0\le s<t\le T} \rho_2(s,t)\big(\rho_2(s,t)-\rho_1(s,t)\big)  \dif s \dif t \\
%&\le C \Big(  \int_{ 0\le s<t\le T} \big(\rho_2(s,t)-\rho_1(s,t)\big)^2  \dif s \dif t \Big)^{\frac12}\\
%&\le C \Big(  \int_{ 0\le s<t\le T} e^{2(s-t)\theta} \big(1\wedge s^{2(2H-1)} \big)  \dif s \dif t \Big)^{\frac12}\\
%&\le C T^{(2H-\frac12)\vee 0}\log T\le C \sqrt{T}.
%\end{align*}%which implies that the result of Proposition~\ref{sijiejun juli qidian} still holds.
\end{proof}

\begin{proposition}\label{sanjiejun juli}
Let $\kappa_3(W_T)$ be defined as in equation \eqref{wt2 bds2}. Then we have 
\begin{align}
\abs{\kappa_3(W_T)}\le \frac{C}{\sqrt{T}}.%\quad \text{or\,\,\,}  \frac{C}{\sqrt{{T}^{ 3(1-2H)\wedge1} }},
\end{align}% when $(G)$ satisfies Hypothesis~ ($H_2'$)-($H_3$) or is given as in Example~\ref{exmp lizi001} respectively.
\end{proposition}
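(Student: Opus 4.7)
The plan is to compare $\kappa_3(W_T)$, as expressed in \eqref{wt2 bds2}, with its fBm counterpart
\begin{align*}
\kappa_3(D_T - \E D_T) = \frac{48}{T^{3/2}} \int_{0 \le r < s < t \le T} \rho_2(r,s)\rho_2(s,t)\rho_2(t,r)\,\dif r\,\dif s\,\dif t,
\end{align*}
which satisfies $|\kappa_3(D_T - \E D_T)| \le C/\sqrt{T}$ by the known estimates in \cite{BBES 23}. Writing $\rho_1 = \rho_2 + \delta$ with $\delta(t,s) := \rho_1(t,s) - \rho_2(t,s)$ and expanding the triple product, one obtains a leading term $\rho_2\rho_2\rho_2$ plus seven correction terms, each of which is a product of three factors drawn from $\{\rho_2, \delta\}$ containing at least one $\delta$. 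It therefore suffices to show that the triple integral of each correction term over the simplex $\{0 \le r < s < t \le T\}$ is $O(T)$, since the prefactor $48/T^{3/2}$ then yields a contribution of order $T^{-1/2}$ to $\kappa_3(W_T)$.

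In Case 1 (Hypothesis $(H_2')$--$(H_3)$), Proposition~\ref{key prop bijiao} supplies $|\rho_2(t,s)| \le C(1 \wedge (t-s)^{2H-2})$ together with $|\delta(t,s)| \le C(1 \wedge s^{2H-2} \wedge (t-s)^{H-1})$ for $0 \le s < t$. The key observation is that each $\delta$-factor $\delta(r,s)$, $\delta(s,t)$, or $\delta(r,t)$ admits a pointwise bound of the form $C(1 \wedge z^{2H-2})$ in its \emph{smaller} argument $z$, and this is integrable on $[0,\infty)$ because $2H-2 < -1$ when $H < 1/2$. The remaining $\rho_2$-factors contribute $(1 \wedge w^{2H-2})$ in their distance variables, which are likewise integrable on $[0,\infty)$. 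After the change of variables $(r, u, v) = (r, s-r, t-s)$ and Fubini's theorem, each correction term reduces to a product of two uniformly finite one-dimensional integrals times a single free variable that contributes the factor $T$.

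In Case 2 (Example~\ref{exmp lizi001}), the sharper bound $|\rho_2(t,s) - \rho_1(t,s)| \le C e^{(s-t)\theta}(1 \wedge s^{2H-1})$ features exponential decay in $t - s$, which directly provides integrability in the corresponding distance variable regardless of $H$ and makes the estimates considerably simpler.

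The main obstacle is the bookkeeping of the seven correction terms: for each $\delta$-factor one must select the appropriate facet of its upper bound (the pointwise $z^{2H-2}$ decay in the smaller argument versus the distance $(t-s)^{H-1}$ decay) and track which variable it supplies integrability in, so that after the change of variables two of the three resulting one-dimensional integrals are bounded uniformly in $T$. Once this is verified for a representative term such as $\delta(r,s)\rho_2(s,t)\rho_2(r,t)$, the remaining corrections (including the all-$\delta$ term $\delta(r,s)\delta(s,t)\delta(r,t)$, where two pointwise $r^{2H-2}$ factors appear but still combine into an integrable function since $(1\wedge r^{2H-2})^2 = 1\wedge r^{4H-4}$ is integrable on $[0,\infty)$ for $H<1/2$) follow by the same mechanism. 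No fundamentally new tools beyond those already used in the proof of Proposition~\ref{sijiejun juli qidian} are required.
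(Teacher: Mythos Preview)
Your proposal is correct and follows essentially the same route as the paper: compare with the fBm cumulant $\kappa_3(D_T-\E D_T)$ via \cite{BBES 23}, expand $\rho_1=\rho_2+\delta$, and bound the seven correction integrals using the estimates of Proposition~\ref{key prop bijiao}. The paper carries out the same bookkeeping term by term, obtaining for each correction an upper bound of either $C$ or $CT^H$ (respectively $CT^{2H}$ in Case~2), whereas you settle uniformly for the cruder $O(T)$; both are adequate since the prefactor is $T^{-3/2}$.

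One minor remark: your description ``product of two uniformly finite one-dimensional integrals times a single free variable'' is slightly idealized. After the change $(r,u,v)=(r,s-r,t-s)$, a factor such as $\rho_2(r,t)$ becomes $(1\wedge(u+v)^{2H-2})$, which does not separate cleanly. In practice one simply bounds the redundant factor by $1$ (or, as the paper does, orders the integrations so the nested one-dimensional integrals over distance variables are each $\le\int_0^\infty(1\wedge x^{2H-2})\dif x<\infty$). This is a cosmetic point; the mechanism you describe is otherwise the paper's.
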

\begin{proof}
First, recall that \cite{BBES 23}, we have
\begin{align*}
\abs{\kappa_3\left(D_T-\E D_T\right)}\le \frac{C}{\sqrt{T}}.
\end{align*}
Then, it follows from the identity \eqref{wt2 bds2} that we only need to show that 
\begin{align}\label{cha rho12}
\abs{ \int_{0\le r<s<t\le T}\left[\rho_1(r,s)\rho_1(s,t)\rho_1(t,r) - \rho_2(r,s)\rho_2(s,t)\rho_2(t,r)\right]\dif r\dif s\dif t}\le CT^{H}\, \text{or}\,\, CT^{2H},
\end{align}
when $(G)$ satisfies Hypothesis~ ($H_2'$)-($H_3$) or is given as in Example~\ref{exmp lizi001} respectively.

Case 1: Suppose $(G)$ satisfies Hypothesis~ ($H_2'$)-($H_3$). We divide the proof into three steps.
Step 1.  Since $H\in (0,\frac12)$, it follows from Proposition~\ref{key prop bijiao} that 
\begin{align}
& \abs{ \int_{0\le r<s<t\le T} \left(\rho_1(r,s)-\rho_2(r,s)\right) \rho_2(s,t)\rho_2(t,r) \dif r\dif s\dif t}\notag\\
%& \le C  \int_{0\le r<s<t\le T} \dif r\dif s\dif t\Big[ e^{r-s}\big(1\wedge r^{2(H-1)} \big) + \int_0^r e^{ {u}-r} u^{H-1}  \dif  {u} \int_r^s e^{ {v}-s}  v^{H-1}  \dif  {v}\Big]\notag\\
%&\times \Big[e^{s-t}+\int_0^s e^{\bar{u}-s}  \dif \bar{u} \int_s^t e^{\bar{v}-t}(\bar{v}-\bar{u})^{2H-2}\dif \bar{v}\Big]\Big[e^{r-t}+\int_0^r e^{\bar{u}-r}  \dif \bar{u} \int_r^t e^{\bar{v}-t}(\bar{v}-\bar{u})^{2H-2}\dif \bar{v}\Big]\notag\\ &<C \int_{0\le r<s<t\le T} \big(1\wedge r^{2(H-1)}\big)  (t-s)^{H-1} (t-r)^{H-1}   \dif r \dif s\dif t\notag\\
 &<C \int_{0 }^{\infty} \left(1\wedge r^{2(H-1)}\right)\dif r\int_r^T \left(1\wedge (t-r)^{2(H-1)}\right)\dif t \int_r^t \left(1\wedge(t-s)^{2(H-1)}\right) \dif s\notag\\
 &\le \left(\int_{0 }^{\infty} \left(1\wedge r^{2(H-1)}\right) \dif r \right)^3\le C.\label{rho12cha 00001}
\end{align} 
Similarly, we have
\begin{align}
  \abs{ \int_{0\le r<s<t\le T} \rho_2( r,s)\left(\rho_1(s,t)-\rho_2(s,t)\right)\rho_2(t,r)\dif r\dif s\dif t} &\le C   ,\\
 \abs{\int_{0\le r<s<t\le T} \rho_2( r,s) \rho_2(s,t)\left(\rho_1(t,r)-\rho_2(t,r)\right) \dif r\dif s\dif t}&\le C  . \label{rho12cha 000013}
\end{align}
Step 2. It follows from Proposition~\ref{key prop bijiao} that 
\begin{align}
& \abs{ \int_{0\le r<s<t\le T} \left(\rho_1(r,s)-\rho_2( r,s)\right) \left(\rho_1(s,t)-\rho_2(s,t)\right)\rho_2(t,r)\dif r\dif s\dif t}\notag\\
%& \le C  \int_{0\le r<s<t\le T}\Big[ e^{r-s}\big(1\wedge r^{2(H-1)} \big) + \int_0^r e^{ {u}-r} u^{H-1}  \dif  {u} \int_r^s e^{ {v}-s}  v^{H-1}  \dif  {v}\Big]\notag\\
%&\times \Big[e^{s-t}\big(1\wedge s^{2(H-1)} \big) + \int_0^s e^{ {u}-s} u^{H-1}  \dif  {u} \int_s^t e^{ {v}-t}  v^{H-1}  \dif  {v}\Big]\notag\\
%&\times \Big[e^{r-t}+\int_0^r e^{\bar{u}-r}  \dif \bar{u} \int_r^t e^{\bar{v}-t}(\bar{v}-\bar{u})^{2H-2}\dif \bar{v}\Big] \dif r\dif s\dif t\notag\\
&\le C  \int_{0\le r<s<t\le T} \left(1\wedge r^{2(H-1)} \right) \left(1\wedge s^{2(H-1)} \right)\left(1\wedge (t-r)^{2(H-1)} \right) \dif r\dif s\dif t\notag\\
&\le C \left(\int_{0 }^{\infty}  \left(1\wedge r^{2(H-1)} \right) \dif r  \right)^3
\le C .\label{r1r2chacha2 r2}
\end{align}
Similarly, we have 
\begin{align}
 \abs{\int_{0\le r<s<t\le T} \left(\rho_1(r,s)-\rho_2( r,s)\right)\rho_2(s,t) \left(\rho_1(t,r)-\rho_2(t,r)\right) \dif r\dif s\dif t}& \le C T^{H} ,\\
  \abs{\int_{0\le r<s<t\le T} \rho_2( r,s) \left(\rho_1(s,t)-\rho_2(s,t)\right)\left(\rho_1(t,r)-\rho_2(t,r)\right) \dif r\dif s\dif t} &\le  C T^{H} .\label{r1r2chacha2 r2-3}
\end{align}

Step 3. It follows from Proposition~\ref{key prop bijiao} that  \begin{align}
& \abs{ \int_{0\le r<s<t\le T} \left(\rho_1(r,s)-\rho_2( r,s)\right) \left(\rho_1(s,t)-\rho_2(s,t)\right) \left(\rho_1(t,r)-\rho_2(t,r)\right) \dif r\dif s\dif t}\notag\\
%& \le C  \int_{0\le r<s<t\le T}\Big[ e^{r-s}\big(1\wedge r^{2(H-1)} \big) + \int_0^r e^{ {u}-r} u^{H-1}  \dif  {u} \int_r^s e^{ {v}-s}  v^{H-1}  \dif  {v}\Big]\notag\\
%&\times \Big[e^{s-t}\big(1\wedge s^{2(H-1)} \big) + \int_0^s e^{ {u}-s} u^{H-1}  \dif  {u} \int_s^t e^{ {v}-t}  v^{H-1}  \dif  {v}\Big]\notag\\
%&\times \Big[e^{r-t}\big(1\wedge r^{2(H-1)} \big) +   \int_0^r e^{ {u}-r} u^{H-1}  \dif  {u} \int_r^t e^{ {v}-t}  v^{H-1}  \dif  {v}\Big] \dif r\dif s\dif t\notag\\
&\le C \int_{0\le r<s<t\le T} \left(1\wedge r^{2(H-1)} \right) \left(1\wedge s^{2(H-1)} \right)  \left(1\wedge (t-r)^{H-1}\right) \dif r\dif s\dif t 
%&< CT^H \int_{0 }^{\infty}  \big(1\wedge r^{4(H-1)} \big) \dif r \int_{0 }^{\infty} \big(1\wedge s^{2(H-1)} \big) \dif s
\le CT^{H}.\label{r1r2chacha3 }
\end{align} 

Finally, by plugging the seven upper bounds \eqref{rho12cha 00001}-\eqref{r1r2chacha3 } into the left hand side of \eqref{cha rho12}, we obtain the inequality \eqref{cha rho12}.

Case 2: Let $(G)$ be given as described in Example~\ref{exmp lizi001}.  By applying the inequality \eqref{rho21ts cha bds-new case}, we can replace all seven upper bounds in \eqref{rho12cha 00001}-\eqref{r1r2chacha3 } of Step 1 with $CT^{2H}$. %; the three upper bounds in \eqref{r1r2chacha2 r2}-\eqref{r1r2chacha2 r2-3} of Step 1 can be replaced by $CT^{2H},\,CT^{3H},\, CT^{2H}$ respectively;  and the upper bound in \eqref{r1r2chacha3 } can be replaced by $CT^{2H}$. 
Therefore, the second case of the inequality \eqref{cha rho12} holds.
\end{proof}

\begin{proposition}\label{sijiejun juli}
Let $\kappa_4(W_T)$ be defined as in \eqref{wt2 bds3}. Then we have 
\begin{align}
\abs{\kappa_4(W_T)}\le \frac{C}{\sqrt{T}}.%\quad \text{or\,\,\,}  \frac{C}{{T}^{ (2-4H)\wedge\frac12} },
\end{align} %when $(G)$ satisfies Hypothesis~ ($H_2'$)-($H_3$) or is given as in Example~\ref{exmp lizi001} respectively.   
\end{proposition}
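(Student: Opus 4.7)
The plan is to mirror the proof of Proposition~\ref{sanjiejun juli}. First I would invoke the baseline bound $\abs{\kappa_4(D_T-\E D_T)}\le C/\sqrt{T}$ from \cite{BBES 23}, which by the integral estimate \eqref{wt2 bds3} reduces the task to showing that
$$\Delta_T:=\abs{\int_{0\le s<t<u<v\le T}\big[\rho_1(s,t)\rho_1(t,u)\rho_1(u,v)\rho_1(v,s)-\rho_2(s,t)\rho_2(t,u)\rho_2(u,v)\rho_2(v,s)\big]\dif s\dif t\dif u\dif v}$$
is at most $CT^{3/2}$, in both Case 1 (Hypothesis ($H_2'$)--($H_3$)) and Case 2 (Example~\ref{exmp lizi001}).

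Second, I would apply the telescoping decomposition $\rho_1=\rho_2+(\rho_1-\rho_2)$ to each of the four factors, producing $2^4-1=15$ nontrivial integrands, each carrying at least one $(\rho_1-\rho_2)$ factor. I would group them by the number $k\in\{1,2,3,4\}$ of $(\rho_1-\rho_2)$ factors into four families of $\binom{4}{k}$ terms, in exact analogy with Steps~1--3 of Proposition~\ref{sanjiejun juli}. On each $\rho_2$ factor I would use $\abs{\rho_2(t,s)}\le C(1\wedge(t-s)^{2(H-1)})$ from \eqref{rho2 guji}, and on each $(\rho_1-\rho_2)$ factor in Case~1 I would use the double bound \eqref{rho21ts cha bds}, selecting either the absolute-position form $(1\wedge s^{2(H-1)})$ or the gap form $(1\wedge(t-s)^{H-1})$ depending on which produces integrability in the corresponding variable.

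The combinatorial choice of bounds is the main obstacle. The integration region has four ordered variables with three inner gaps $t-s,\,u-t,\,v-u$ and a cyclic pair spanning $v-s$; one must choose bounds so that three of the four integrations are absolutely convergent, via $\int_0^{\infty}(1\wedge r^{2(H-1)})\dif r<\infty$ and Lemma~\ref{upper bound F-1}, both of which hold since $H\in(0,\frac12)$, leaving at most one variable to contribute a factor of order $T$. The delicate case arises when $\rho(v,s)$ is kept as a $\rho_2$ factor, since its gap bound $(1\wedge(v-s)^{2(H-1)})$ is weaker than the inner-gap bounds and one must reserve an absolute-position bound on another factor to regain integrability. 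Carrying this out in each of the 15 terms should yield $\Delta_T\le CT^{1+H}\le CT^{3/2}$ since $H<\frac12$.

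For Case~2, the additional exponential factor $e^{(s-t)\theta}$ built into \eqref{rho21ts cha bds-new case} makes the analysis considerably easier: each $(\rho_1-\rho_2)$ factor integrates to a constant in its corresponding gap variable and absorbs any algebraic growth, so each of the 15 terms is bounded by $CT^{2H}$, exactly paralleling the $\kappa_3$ argument, and $\Delta_T\le CT^{2H}\le CT^{3/2}$ follows immediately. Combined with the baseline fBm bound, this yields $\abs{\kappa_4(W_T)}\le C/\sqrt{T}$.
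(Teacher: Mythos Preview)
Your proposal is correct and follows essentially the same approach as the paper: reduce via the baseline bound from \cite{BBES 23} and \eqref{wt2 bds3} to the fifteen telescoped difference terms, then control each with Proposition~\ref{key prop bijiao}, choosing between the absolute-position and gap forms of \eqref{rho21ts cha bds} exactly as you describe. The paper in fact obtains the sharper target $\Delta_T\le CT^{H}$ in Case~1 (every one of the fifteen terms is shown to be $\le C$ or $\le CT^{H}$) and $\Delta_T\le CT^{2H}$ in Case~2, rather than your $CT^{1+H}$, but your weaker bound already suffices.
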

\begin{proof}
 It follows from \cite{BBES 23} that 
\begin{align*}
\abs{\kappa_4\left(D_T-\E D_T\right)}\le \frac{C}{\sqrt{T}},
\end{align*}
which, together with the identity \eqref{wt2 bds3}, implies that we only need to show that 
\begin{align}
&\abs{\int_{0\le s<t<u<v\le T}\left(\rho_1(s,t)\rho_1(t,u)\rho_1(u,v)\rho_1(v,s)-\rho_2(s,t)\rho_2(t,u)\rho_2(u,v)\rho_2(v,s)\right)\dif s\dif t\dif u\dif v}\notag\\
&\le C T^H \quad \text{or}\quad CT^{2H},\label{kappa 4 mubiao}
\end{align}
when $(G)$ satisfies Hypothesis~ ($H_2'$)-($H_3$) or is given as in Example~\ref{exmp lizi001} respectively.

Case 1: When $(G)$ satisfies Hypothesis~ ($H_2'$)-($H_3$). We divide it's proof into four steps.
Step 1.  It follows from Proposition~\ref{key prop bijiao} that 
\begin{align}
& \abs{ \int_{0\le s<t<u<v \le T} \left(\rho_1(s,t)-\rho_2(s,t)\right) \rho_2(t,u)\rho_2(u,v)\rho_2(v,s)  \dif s\dif t\dif u \dif v}\notag\\
%& \le C  \int_{0\le s<t<u<v\le T} \dif r\dif s\dif t\Big[e^{s-t}\big(1\wedge s^{2(H-1)} \big) + \int_0^s e^{ {u}-s} u^{H-1}  \dif  {u} \int_s^t e^{ {v}-t}  v^{H-1}  \dif  {v}\Big]\notag\\
%&\times \Big[e^{t-u}+\int_0^t e^{\bar{u}-t}  \dif \bar{u} \int_t^u e^{\bar{v}-u}(\bar{v}-\bar{u})^{2H-2}\dif \bar{v}\Big]\Big[e^{r-t}+\int_0^r e^{\bar{u}-r}  \dif \bar{u} \int_r^t e^{\bar{v}-t}(\bar{v}-\bar{u})^{2H-2}\dif \bar{v}\Big]\notag\\ &<C \int_{0\le r<s<t\le T} \big(1\wedge r^{2(H-1)}\big)  (t-s)^{H-1} (t-r)^{H-1}   \dif r \dif s\dif t\notag\\
&<C \int_{0\le s<t<u<v \le T}\left(1\wedge s^{2(H-1)}\right) \left(1\wedge ( u-t)^{2(H-1)} \right)\left(1\wedge(v-u)^{2(H-1)}\right) \notag\\
 &\times\left(1\wedge(v-s)^{2(H-1)}\right) \dif s\dif t\dif u \dif v \notag\\
 &<C\left(\int_{0 }^{\infty} \left(1\wedge s^{2(H-1)}\right) \dif s\right)^4 %\int_{s<t<u<v \le T}    ( u-t)^{H-1}   (v-u)^{H-1} (t-s)^{H-1} \dif t\dif u \dif v \notag\\
<C .\label{kappa4 di1}
\end{align} 
Similarly, we have
\begin{align}
  \abs{ \int_{0\le s<t<u<v \le T}\rho_2(s,t) \big(\rho_1(t,u)- \rho_2(t,u)\big)\rho_2(u,v)\rho_2(v,s)  \dif s\dif t\dif u \dif v}&<C,\\
   \abs{ \int_{0\le s<t<u<v \le T}\rho_2(s,t)\rho_2(t,u) \big(\rho_1(u,v)- \rho_2(u,v)\big)\rho_2(v,s)  \dif s\dif t\dif u \dif v}&<C,\\
   \abs{ \int_{0\le s<t<u<v \le T}\rho_2(s,t)\rho_2(t,u) \rho_2(u,v)\big(\rho_1(v,s)- \rho_2(v,s) \big) \dif s\dif t\dif u \dif v}&<C. \label{kappa4 di1-4}
   \end{align}
Step 2.  It follows from Proposition~\ref{key prop bijiao} that 
\begin{align}
& \abs{\int_{0\le s<t<u<v \le T} \left(\rho_1(s,t)-\rho_2(s,t)\right) \left(\rho_1(t,u)-\rho_2(t,u)\right)\rho_2(u,v)\rho_2(v,s)  \dif s\dif t\dif u \dif v}\notag\\
 &\le C \int_{0\le s<t<u<v \le T} \left(1\wedge s^{2(H-1)}\right)    \left(1\wedge t^{2(H-1)}\right)     \left(1\wedge  (v-u)^{2(H-1)}\right)    \left(1\wedge (v-s)^{2(H-1)}\right) \dif s\dif t\dif u \dif v\notag\\
 %&<C \int_{[0,\infty )^2} \big(1\wedge s^{2(H-1)}\big)    \big(1\wedge t^{2(H-1)}\big)  \dif s\dif t \int_{t<u<v \le T} (v-u)^{H-1} (u-t)^{H-1}\dif u \dif v\notag\\
 %&<C T^{2H}\int_{0 }^{\infty} \big(1\wedge s^{2(H-1)}\big) \dif r \int_{0 }^{\infty} \big(1\wedge t^{2(H-1)}\big) \dif t   
 &\le C \left(\int_{0 }^{\infty} \left(1\wedge s^{2(H-1)}\right) \dif s\right)^4\le C.\label{kappa4 di2}
\end{align}
In the same vein, we have 
\begin{align}
  \abs{ \int_{0\le s<t<u<v \le T} \left(\rho_1(s,t)-\rho_2(s,t)\right) \rho_2(t,u)\left(\rho_1(u,v)-\rho_2(u,v)\right)\rho_2(v,s) \dif s\dif t\dif u \dif v} &<C ,\notag\\
   \abs{\int_{0\le s<t<u<v \le T} \left(\rho_1(s,t)-\rho_2(s,t)\right) \rho_2(t,u)\rho_2(u,v)\left(\rho_1(v,s)-\rho_2(v,s) \right)\dif s\dif t\dif u \dif v} &<C T^H,\notag\\
  \abs{ \int_{0\le s<t<u<v \le T} \rho_2(s,t) \left(\rho_1(t,u)-\rho_2(t,u)\right)\left(\rho_1(u,v)-\rho_2(u,v)\right)\rho_2(v,s) \dif s\dif t\dif u \dif v} &<C ,\notag\\
  \abs{\int_{0\le s<t<u<v \le T} \rho_2(s,t)\left(\rho_1(t,u)- \rho_2(t,u)\right)\rho_2(u,v)\left(\rho_1(v,s)-\rho_2(v,s)\right)\dif s\dif t\dif u \dif v} &<C ,\notag\\
  \abs{\int_{0\le s<t<u<v \le T} \rho_2(s,t) \rho_2(t,u)\left(\rho_1(u,v)-\rho_2(u,v)\right)\left(\rho_1(v,s)-\rho_2(v,s)\right)\dif s\dif t\dif u \dif v} &<C . \label{kappa4 di2-4}
  \end{align}
Step 3. It follows from Proposition~\ref{key prop bijiao} that
\begin{align}
& \abs{ \int_{0\le s<t<u<v \le T} \left(\rho_1(s,t)-\rho_2(s,t)\right) \left(\rho_1(t,u)-\rho_2(t,u)\right)\left(\rho_1(u,v)-\rho_2(u,v)\right)\rho_2(v,s) \dif s\dif t\dif u \dif v}\notag\\
 &\le C \int_{0\le s<t<u<v \le T} \left(1\wedge s^{2(H-1)}\right)    \left(1\wedge t^{2(H-1)}\right)     \left(1\wedge u^{2(H-1)}\right) \left(1\wedge (v-s)^{2(H-1)}\right) \dif s\dif t\dif u \dif v\notag\\
 &\le C \int_{[0,\infty )^3} \left(1\wedge s^{2(H-1)}\right)\left(1\wedge t^{2(H-1)}\right) \left(1\wedge u^{2(H-1)}\right) \dif s\dif t \dif u\int_{u<v \le T} \left(1\wedge(v-u)^{2(H-1)}\right) \dif v\notag\\
 %&<C T^{2H}\int_{0 }^{\infty} \big(1\wedge s^{2(H-1)}\big) \dif r \int_{0 }^{\infty} \big(1\wedge t^{2(H-1)}\big) \dif t   
 &\le C . \label{kappa4 di3}
\end{align} 
In the same vein, we have
\begin{align}
  \abs{ \int_{0\le s<t<u<v \le T}  \left(\rho_1(s,t)-\rho_2(s,t)\right) \left(\rho_1(t,u)-\rho_2(t,u)\right)\rho_2(u,v)\left(\rho_1(v,s)-\rho_2(v,s) \right)\dif s\dif t\dif u \dif v} &\le C T^H,\notag\\
   \abs{ \int_{0\le s<t<u<v \le T} \left(\rho_1(s,t)-\rho_2(s,t)\right)\rho_2(t,u)\left(\rho_1(u,v)-\rho_2(u,v)\right)\left(\rho_1(v,s)-\rho_2(v,s) \right)\dif s\dif t\dif u \dif v} &\le C T^H,\notag\\
  \abs{ \int_{0\le s<t<u<v \le T} \rho_2(s,t)\left(\rho_1(t,u)-\rho_2(t,u)\right)\left(\rho_1(u,v)-\rho_2(u,v)\right)\left(\rho_1(v,s)-\rho_2(v,s) \right)\dif s\dif t\dif u \dif v} &\le C . \label{kappa4 di3-4}
  \end{align}
Step 4. It follows from Proposition~\ref{key prop bijiao} that
\begin{align}
& \Bigg|\int_{0\le s<t<u<v \le T} \left(\rho_1(s,t)-\rho_2(s,t)\right) \left(\rho_1(t,u)-\rho_2(t,u)\right)\notag\\
&\times \left(\rho_1(u,v)-\rho_2(u,v)\right)\left(\rho_1(v,s)-\rho_2(v,s)\right) \dif s\dif t\dif u \dif v \Bigg|\notag\\
 &\le C \int_{0\le s<t<u<v \le T} \left(1\wedge s^{2(H-1)}\right)    \left(1\wedge t^{2(H-1)}\right)     \left(1\wedge u^{2(H-1)}\right)\left(1\wedge(v-s)^{H-1}\right)\dif s\dif t\dif u \dif v\notag\\
 %&<C \int_{[0,\infty )^3} \big(1\wedge s^{2(H-1)}\big)    \big(1\wedge t^{2(H-1)}\big)     \big(1\wedge u^{2(H-1)}\big) \dif s\dif t \dif u\int_{u<v \le T}   \dif v\notag\\
 %&<C T^{2H}\int_{0 }^{\infty} \big(1\wedge s^{2(H-1)}\big) \dif r \int_{0 }^{\infty} \big(1\wedge t^{2(H-1)}\big) \dif t   
 &\le CT^H.\label{kappa4 zuihou}
\end{align}
Finally, by plugging the nine upper bounds \eqref{kappa4 di1}-\eqref{kappa4 zuihou}  into the left hand side of \eqref{kappa 4 mubiao}, we obtain the inequality \eqref{kappa 4 mubiao}.

Case 2: When $(G)$ is given as in Example~\ref{exmp lizi001}, {the inequality \eqref{rho21ts cha bds-new case} allows us to replace} %we have that all the upper bounds in the inequalities \eqref{kappa4 di1}-\eqref{kappa4 di1-4} can be replaced by $CT^{4H}$; the upper bounds in the inequalities \eqref{kappa4 di2}-\eqref{kappa4 di2-4} can be replaced by $CT^{2H}$ or $CT^{3H}$; and the upper bounds in the inequalities \eqref{kappa4 di3}-\eqref{kappa4 zuihou} can be replaced by $CT^{2H}$. 
all nine upper bounds in \eqref{rho12cha 00001}-\eqref{r1r2chacha3 } of Step 1  with $CT^{2H}$. Finally, plugging the new nine upper bounds into the left hand side of \eqref{kappa 4 mubiao}, we obtain the inequality \eqref{kappa 4 mubiao} since $H\in (0,\frac12)$.
\end{proof}
  
\section*{Acknowledgements}
The work is supported by National Natural Science Foundation of China (No.11961033,12171410) and the General Project of Hunan Provincial Education Department of China (No. 22C0072).% and Jiangxi Provincial Natural Science Foundation (No. 20224BCD41001). %\bibliography{thesis.bib}

 \end{document}